\numberwithin{equation}{section}
\newtheorem{thm}{Theorem}[section]
\newtheorem{prop}[thm]{Proposition}
\newtheorem{lm}[thm]{Lemma}
\newtheorem{cor}[thm]{Corollary}
\newtheorem{Rq}[thm]{Remark}
\theoremstyle{definition}
\theoremstyle{remark}
\theoremstyle{plain}
\DeclareMathAlphabet{\calptmx}{OMS}{ztmcm}{m}{n}
\newcommand{\R}{\mathbb{R}}
\newcommand{\Sn}{\mathbb{S}^n}
\newcommand{\Cc}{\mathcal{C}}
\newcommand{\Sb}{\mathbb{S}}
\newcommand{\non}{\noindent}
\newcommand{\bound}{(1+\|W\|^{\frac{1}{2}}_{\mathcal{C}^1}+\|V\|_{\Cc^1})}
\newcommand{\boundinf}{(1+\|W\|^{1/2}_{\infty}+\|V\|_\infty)}
\author{Bakri Laurent
\footnote{Laboratoire de Math\'ematiques et Physique Th\'eorique (UMR CNRS 6083),
 Universit\'e Fran\c{c}ois Rabelais, Parc de Grandmont, 37200 Tours, France.}
\footnote{ E-mail:  laurent.bakri@gmail.com } 
\and 
Casteras Jean-Baptiste 
\footnote{Laboratoire de Math\'ematiques (UMR CNRS 6205), Universit\'e de Bretagne Occidentale, 6 av. Victor Le Gorgeu, 29238 Brest Cedex 3,
France}
\footnote{ E-mail:  jean-baptiste.casteras@univ-brest.fr}
}
\title{ Quantitative uniqueness for Schr\"odinger operator %with $\mathcal{C}^1$ potential
with regular %magnetic 
 potentials}
\begin{document}
%\linenumbers
%\selectlanguage{english}s 
\date{}
\maketitle
%  \begin{center}
% % \small{%Laboratoire de Math\'ematiques\\
% % Universit\'e de Bretagne Occidentale\\
% % 6 av. Victor Le Gorgeu -- CS 93837\\
% % 29238 Brest Cedex\\
% % France\\
% %E-mail:  laurent.bakri@gmail.com}
% \end{center}
\medskip
%  \begin{abstract}
%  We give a sharp upper bound on the vanishing order of solutions to Schr\"odinger equation with $\Cc^1$ magnetic potential on a compact smooth manifold.
% Our method is based on quantitative Carleman type inequalities developed by  Donnelly and Fefferman \cite{DF1}.
% It also extends  the first author's previous work \cite{B1} to the magnetic potential case.
%  \end{abstract}

 \begin{abstract}
 We give a sharp upper bound on the vanishing order of solutions to Schr\"odinger equation with $\Cc^1$ electric and magnetic potentials on a compact smooth manifold.
Our method is based on quantitative Carleman type inequalities developed by  Donnelly and Fefferman. It also extends  the first author's previous work to the magnetic potential case.
 \end{abstract}

{\bf \noindent Keywords :} Quantitative unique continuation, Carleman inequalities, three balls inequalities, doubling estimates,  Schr\"odinger operator, linear elliptic problem.\\

{\bf \noindent  Mathematics Subject Classification (2010):} 35J15, 35J10, 35B45, 58J05.

\section{Introduction}
Let $(M,g)$ be a smooth,  compact, connected, $n$-dimensional Riemannian manifold.
The aim of this paper is to obtain quantitative estimate on the vanishing order of  solutions to  
\begin{equation}\label{E}\Delta u+V\cdot \nabla u+ Wu=0.\end{equation}
We are concerned with $H^1$, non-trivial, solutions to \eqref{E} and $\Cc^1$ potentials (\emph{i.e}  $W$ is a $\Cc^
1$-function on $M$ and $V$ is a  $\Cc^1$-vector field). % \in\Cc^1(M)$ and $V\in\Gamma(TM)))$).
 Recall that the vanishing order at a point $x_0\in M$ of a $L^2$-function $u$ is
\[\inf\left\{d>0\ ;\ \  \limsup_{r\rightarrow 0}\frac{1}{r^{d}}\left(\frac{1}{r^n}\int_{B_r(x_0)}|u(x)|^2dv_g(x)\right)^\frac{1}{2}>0\right\}. \] 
With this setting our main result is  the following 
\begin{thm}\label{van}
The vanishing order of solutions to \eqref{E} is everywhere less than \[C\bound,\]
where $C$ is a positive constant depending only on $(M,g)$.
\end{thm}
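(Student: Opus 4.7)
The plan is to follow the Donnelly--Fefferman program: derive a weighted Carleman inequality at a point $x_{0}\in M$ strong enough to absorb both the scalar and magnetic lower order terms, deduce from it a three-balls inequality for solutions of \eqref{E}, propagate it to a doubling estimate at $x_{0}$, and read off the vanishing order by a standard iteration.

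First, I would work in geodesic normal coordinates around a fixed $x_{0}\in M$ inside a small ball $B_{r_{0}}(x_{0})$ below the injectivity radius, writing $\rho$ for the Riemannian distance to $x_{0}$, and establish a Carleman estimate of schematic form
\begin{equation*}
\tau^{3}\int e^{2\tau\phi}w_{0}(\rho)u^{2}\,dv_{g}+\tau\int e^{2\tau\phi}w_{1}(\rho)|\nabla u|^{2}\,dv_{g}\ \le\ C\int e^{2\tau\phi}w_{2}(\rho)(\Delta u)^{2}\,dv_{g},
\end{equation*}
valid for all $\tau\ge\tau_{0}$ and all $u\in\Cc^{\infty}_{c}(B_{r_{0}}\setminus\{x_{0}\})$, where $\phi$ is a radial Donnelly--Fefferman-type weight and $w_{0},w_{1},w_{2}$ are specific powers of $\rho$ with $w_{2}/w_{0}\sim\rho^{4}$ and $w_{2}/w_{1}\sim\rho^{2}$. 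Substituting $\Delta u=-V\cdot\nabla u-Wu$, one bounds the right-hand side by
\begin{equation*}
C\int e^{2\tau\phi}w_{2}(\rho)\bigl(|V|^{2}|\nabla u|^{2}+W^{2}u^{2}\bigr)\,dv_{g}.
\end{equation*}
The weight gains $\rho^{2}$ and $\rho^{4}$ together with the smallness of $r_{0}$ allow, upon choosing $\tau\ge\tau_{0}+C_{0}\bound$, both error terms to be absorbed into the left-hand side; the linear dependence on $\|V\|_{\Cc^{1}}$ comes from an integration by parts on the cross term $V\cdot\nabla u$, producing a contribution of order $\|\nabla V\|_{\infty}$, and the $\|W\|^{1/2}_{\Cc^{1}}$ rate reflects the balance between the $\tau^{3}$ coefficient on the left and the $\rho^{4}$ gain on the right.

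Second, with this Carleman inequality in hand, applying it to $\chi u$ for suitable annular cutoffs $\chi$ and performing the standard commutator bookkeeping yields a three-balls inequality
\begin{equation*}
\|u\|_{L^{2}(B_{r_{2}})}\le C\|u\|_{L^{2}(B_{r_{1}})}^{\alpha}\|u\|_{L^{2}(B_{r_{3}})}^{1-\alpha},
\end{equation*}
with $\alpha\in(0,1)$ depending only on the ratios $r_{1}:r_{2}:r_{3}$ and $C$ depending exponentially on $\tau$. Propagating this inequality along a chain of overlapping balls covering $M$ and using the global $L^{2}$ normalization of the nontrivial solution $u$, one obtains the doubling estimate
\begin{equation*}
\|u\|_{L^{2}(B_{2r}(x_{0}))}\le 2^{N}\|u\|_{L^{2}(B_{r}(x_{0}))},\qquad N=C\bound,
\end{equation*}
from which Theorem \ref{van} follows by the standard iteration letting $r\to 0$.

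The main obstacle is the Carleman inequality in the first step. It must simultaneously carry a gradient term $\tau\int e^{2\tau\phi}w_{1}(\rho)|\nabla u|^{2}$ on the left, with the right power of $\tau$ and the right radial weight, so that $|V\cdot\nabla u|^{2}$ can be absorbed with linear dependence on $\|V\|_{\Cc^{1}}$, while keeping a $\tau^{3}$ coefficient in front of $u^{2}$ so that the sharp $\sqrt{\|W\|_{\Cc^{1}}}$ rate for the scalar term is preserved. Building the radial weight $\phi$ and calibrating the exponents to meet both requirements at once is the technical novelty over the first author's previous work in the case $V=0$.
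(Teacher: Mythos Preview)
Your overall architecture---Carleman estimate, three-balls inequality, chain propagation, doubling, vanishing order---matches the paper exactly. The gap is in the first step, and it is precisely the step that determines the exponents in the final bound. You propose to prove a Carleman estimate for $\Delta$ alone and then absorb the lower-order terms by substituting $\Delta u=-V\cdot\nabla u-Wu$ and bounding the right-hand side by $C\int e^{2\tau\phi}w_{2}\bigl(|V|^{2}|\nabla u|^{2}+W^{2}u^{2}\bigr)$. But this perturbative absorption forces $\tau\ge C\|V\|_{\infty}^{2}$ (to swallow $\|V\|_{\infty}^{2}w_{2}|\nabla u|^{2}$ into $\tau w_{1}|\nabla u|^{2}$) and $\tau\ge C\|W\|_{\infty}^{2/3}$ (to swallow $\|W\|_{\infty}^{2}w_{2}u^{2}$ into $\tau^{3}w_{0}u^{2}$). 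The extra powers of $\rho$ in $w_{2}/w_{1}$ and $w_{2}/w_{0}$ are bounded constants once $r_{0}$ is fixed by the geometry of $M$; they do not improve the exponents on $\|V\|$ and $\|W\|$. The outcome is the bound $C(1+\|V\|_{\infty}^{2}+\|W\|_{\infty}^{2/3})$ already recorded in the introduction, not the sharp one claimed in Theorem~\ref{van}. Your remark about ``an integration by parts on the cross term $V\cdot\nabla u$'' cannot rescue this: after the pointwise bound there is no cross term left to integrate.

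What the paper does is genuinely different: it proves the Carleman estimate (Theorem~\ref{tics}) directly for the full operator $P=\Delta+V\cdot\nabla+W$. One conjugates $L_{\tau}=e^{2t}e^{\tau\phi}P(e^{-\tau\phi}\,\cdot\,)$ and expands $\|L_{\tau}u\|_{f}^{2}$ with the $V$ and $W$ contributions kept inside, so that in the decomposition $I=I_{1}+I_{2}+I_{3}$ the first-order part $(2\tau f'+e^{2t}V_{t})\partial_{t}u+e^{2t}V_{i}\partial_{i}u$ is paired against the second-order and zero-order parts. The repeated integrations by parts in $I_{3}$ (the appendix) then produce terms involving $\partial V$ and $\partial W$---this is why the $\Cc^{1}$ norms, not merely the $L^{\infty}$ norms, enter---and the bookkeeping shows that every error term is controlled once $\tau\ge C_{1}\bound$. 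The weight $\phi$ is the same as in the $V=0$ case; the novelty is not a new weight but carrying $V$ and $W$ through the conjugated operator from the start rather than treating them as a perturbation of $\Delta$.
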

%Before proceeding, we want to point out that this bound is sharp  

Let us first discuss briefly our result. We recall that a differential operator $P$ satisfies the strong unique continuation property (SUCP) if the vanishing order of any non-trivial solutions to $Pu=0$ is finite everywhere. 
There has been an extensive literature dealing with (SUCP)  for solutions to \eqref{E} with singular potentials. 
 %, one should remember that the following is needed so that (SUCP) holds : \begin{center}\begin{itemize}\item $g$ is Lipschitz, 
One of the most useful method to establish (SUCP) is based on Carleman type estimates, some of the principal contributions %	to the (SUCP) 
 to \eqref{E} can be found in (\cite{Aron,JK,kenig, KT, Regs,Sogge, Wolff}). We  particularly refer to \cite{KT} which is, to our knowledge, the strongest results up to now.\newline 
  As can be seen in Theorem \ref{van}, our goal is to derive a quantitative version of this unique continuation property.
 Let us now briefly recall some of the principal results already known in this field.  %In the 
\\   In the particular case of eigenfunctions of the Laplacian  ($W=\lambda$ and $V=0$), it is a celebrated result of   Donnelly and  Fefferman \cite{DF1}
that the vanishing order is bounded by $C\sqrt{\lambda}$. 
In view of this, it seems a natural conjecture (cf \cite{kenig,K})  that for solutions to $\Delta u + W u=0$, the vanishing order %of solutions 
is uniformly  bounded by \[C(1+\|W\|_\infty^{\frac{1}{2}}).\]
However, this conjecture is not true when one allows complex valued potentials and solutions. In this complex case, 
it is known that the optimal exponent on $\|W\|_\infty$ is $\frac{2}{3}$ (see \cite{B2,kenig}).      
  When $W$ is a real bounded function and $V=0$,  Kukavica established in $\cite{K}$ some quantitative results for solutions to \eqref{E}. His method is  based on the frequency function (see also \cite{L}) 
which was introduced by  Garofalo and  Lin in \cite{GL} as an alternative to Carleman estimate for (SUCP).   %show that the vanishing order of solutions
He established that the vanishing order of solutions is everywhere less than : 
  \[C(1+\sqrt{\|W\|_{\infty}}+\left(\mathrm{osc}\:(W)\right)^2),\] where $\mathrm{osc}(W)=\sup W-\inf W$ and $C$ a constant depending only on $(M,g)$.
  \\
  If $W$ is
  $\Cc^1$, the  first author established  in \cite{B1} the upper bound  \[C(1+\|W\|^{\frac{1}{2}}_{\Cc^1}),\] with $\|W\|_{\Cc^1}=\|W\|_{\infty}+\|\nabla W\|_{\infty}$ and where the exponent $\frac{1}{2}$ is sharp.% Our  main result is the following
 In the general case of equation \eqref{E}, it seems that the first algebraic upper bound, depending on $\|V\|_\infty$ and $\|W\|_\infty$, is given in \cite{B2} 
where it is shown that it is everywhere less than 
\[C(1+\|V\|^2_\infty +\|W\|^{\frac{2}{3}}_\infty).\]

For the real case with magnetic potential, I. Kukavica conjectured  in  \cite{K}    that the vanishing order of solutions is less than \[C(1+\|V\|_\infty +\|W\|^{\frac{1}{2}}_\infty).\]
Finally  in \cite{Linakawangib} (see also \cite{linakawangduke}) quantitative uniqueness is shown for singular potentials. This means that vanishing order is everywhere bounded by a constant, which is no longer explicit. 
Our method  is based on $L^2$-Carleman estimate (Theorem \ref{tics}) in the same spirit as \cite{DF1} : establish a Carleman estimate on the involved operator 
(here : $P:u\mapsto \Delta u+V\cdot \nabla u+Wu$) which is only true for great parameter $\tau$, and state explicitly how $\tau$ depends on the $\Cc^1$ norms of the potentials $V,$ $W$.   %Carleman estimate are weighted integral estimates.   of the following type : 

\noindent Our Carleman estimate will allow us to derive the following doubling inequality \begin{equation}\label{doubl}\|u\|_{L^2(B_{2r}(x_0))}\leq e^{C\bound}\|u\|_{L^2(B_r(x_0))}.\end{equation}
This doubling estimate implies Theorem \ref{van}.

The paper is organized as follows. In section 2 we establish Carleman estimates for the operator $P:u\mapsto\Delta u+V\cdot\nabla u+W u$. 
Our method involves repeated  integration by parts in the radial and spherical variables. For the  sake of clarity, a part of the computation is sent to the appendix. %Naturally the assumption on the regu potentials are crucial to derive our estimate

In section 3, we deduce, in a standard manner, a three balls property for solutions to \eqref{E}. Then using that $M$ is compact we derive a doubling 
inequality which gives immediately Theorem \ref{van}. Finally, in section 4, we show the sharpness of our result with respect to the power of the norms of the potentials $V$ and $W$. 
\subsection{Notations.}

For a fixed point $x_0$  in $M$ we will use the following standard notations:
\begin{itemize}
\item $\Gamma_1(TM)$ will denote the set of $\Cc^1$ vector fields on $M$.
\item $r:=r(x)=d(x,x_0)$ stands for the Riemannian distance from $x_0$,\\
\item $B_r:=B_r(x_0)$ denotes the geodesic ball centered at  $x_0$ of radius $r$,\\
\item $A_{r_1,r_2}:=B_{r_2}\setminus B_{r_1}$.\\
\item $\varepsilon$ stands for a fixed number with $0<\varepsilon<1$. 
\item $R_0,R_1,c,C,C_1,C_2$ will denote positive constants which depend only on $(M,g)$. They may change from a line to another. %Moreover, $c$ denotes arbitrary small constant.
\item $\|\cdot\|$ stands for the  $L^2$ norm on $M$ and $\|\cdot\|_A$ the $L^2$ norm on the (measurable) set $A$. In case $T$ is a vector field (or a tensor), 
$\left\|T\right\|$ has to be understood as $\|\ |T|_g\|$. \\
\end{itemize}

%{\bf Acknowledgement : The first author is grateful to the LMPT (Tours, France) in which a part of this paper has been written}
\section{Carleman estimates} 

Recall that Carleman estimates are weighted integral inequalities with a weight function $e^{\tau\phi}$, where the function 
$\phi$ satisfies some convexity properties. 
Let us now define the weight function we will use.\newline  
For a fixed number $\varepsilon$ such that  $0<\varepsilon<1$ and $T_0<0$, we define the function $f$ on  $]-\infty,T_0[ $ by $f(t)=t-e^{\varepsilon t}$. 
One can check easily that,  for $|T_0|$ great enough, the function $f$ verifies the following properties:
\begin{equation}\label{f}
\begin{split}
& 1-\varepsilon e^{\varepsilon T_0}\leq f^\prime(t)\leq 1\quad \forall t\in]-\infty,T_0[,\\
&\displaystyle{\lim _{t\rightarrow-\infty}-e^{-t}f^{\prime\prime}(t)}=+\infty. 
\end{split}
\end{equation}

Finally we define $\phi(x)=-f(\ln r(x))$. Now we can state the main result of this section:

\begin{thm}\label{tics}
 There exist positive constants $R_0, C,C_1$, which depend only on  $M$ and $\varepsilon$, such that, 
for any $\:W\in \Cc^1(M)$, any $V\in\Gamma_1(TM)$, any  $x_0\in M$, any  $u\in C^\infty_0(B_{R_0}(x_0)\setminus\{x_0\})$ and 
 any $\tau \geq C_1(1+\|W\|^{\frac{1}{2}}_{\mathcal{C}^1}+\|V\|_{\Cc^1})$, one has    
\begin{equation}\label{S}%\begin{array}{ccc}
C\left\|r^2e^{\tau\phi}\left(\Delta u +V\cdot \nabla u+Wu \right)\right\|\geq \tau^\frac{3}{2}\left\|r^{\frac{\varepsilon}{2}}e^{\tau\phi}u\right\|
+ \tau^{\frac{1}{2}}\left\|r^{1+\frac{\varepsilon}{2}}e^{\tau\phi}\nabla u\right\|.
\end{equation}
\end{thm}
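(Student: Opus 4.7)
The plan is to establish \eqref{S} in three stages: first prove it for the pure Laplacian, then absorb the zeroth-order term $Wu$, and finally the first-order term $V\cdot\nabla u$. The positive terms on the right of \eqref{S} will come from the commutator of the symmetric and antisymmetric parts of a conjugated radial operator; the convexity of the weight $\phi$ - encoded by $-e^{-t}f''(t)\to+\infty$ in \eqref{f} - is precisely what produces the $r^\varepsilon$ power.

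First I would work in geodesic polar coordinates $(r,\theta)$ around $x_0$, perform the logarithmic change of variable $s=\ln r$, and conjugate by $r^{(n-2)/2}$ to eliminate the first-order radial term in $r^2\Delta$. This reduces $r^2\Delta$ to an operator whose principal part is $\partial_s^2+\Delta_{\mathbb S^{n-1}}$ together with bounded corrections coming from the metric of $M$. A further conjugation by $e^{-\tau f(s)}$ produces an operator $L_\tau$ whose radial first-order term is $-2\tau f'(s)\partial_s$. I split $L_\tau=S+B$ into its symmetric and antisymmetric parts for $L^2(ds\,d\theta)$ and expand
\[
\|L_\tau w\|^2=\|Sw\|^2+\|Bw\|^2+\langle[S,B]w,w\rangle.
\]
The commutator $[S,B]$ unfolds into a sum of integrals involving $f''$ and $f'''$; by \eqref{f} these behave like $r^\varepsilon$ near the origin, so after one integration by parts in $s$ and one on $\mathbb S^{n-1}$ they yield the positive combination $c\tau^3\int r^\varepsilon|w|^2+c\tau\int r^{2+\varepsilon}|\nabla w|^2$. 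Undoing the conjugations gives \eqref{S} in the case $V=W=0$.

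To incorporate the potentials I would build them into the splitting \emph{before} expanding $\|L_\tau w\|^2$, since a direct triangle-inequality absorption would yield only the weaker threshold $\tau\gtrsim\|W\|_\infty^{2/3}+\|V\|_\infty^{2}$. The multiplier $r^2 W$ is symmetric, so its effect on the commutator is $\langle[r^2W,B]w,w\rangle$, which after one integration by parts in $s$ reduces to an integral of $\tau f'\cdot\partial_s(r^2W)\cdot|w|^2=\tau f'\bigl(2r^2W+r^3\partial_rW\bigr)|w|^2$ - linear in $W$ rather than quadratic - and is therefore bounded by $\tau\|W\|_{\Cc^1}\int r^2|w|^2$. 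After conjugation the operator $r^2V\cdot\nabla$ acquires a zeroth-order piece $\tau r^2V\cdot\nabla\phi$ plus its own symmetric/antisymmetric decomposition with coefficient $\tfrac12 r^2\mathrm{div}\,V$. Each new commutator term, again after one integration by parts, is controlled by $\tau\|V\|_{\Cc^1}\bigl(\int r^{2+\varepsilon}|\nabla w|^2+\int r^\varepsilon|w|^2\bigr)+\|V\|_{\Cc^1}^2\int r^{2+\varepsilon}|w|^2$.

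Provided $R_0$ is small (depending only on $(M,g)$ and $\varepsilon$), these error terms are absorbed into the positive commutator contributions as soon as $\tau\geq C_1(1+\|W\|_{\Cc^1}^{1/2}+\|V\|_{\Cc^1})$, which yields \eqref{S}. The main obstacle is the bookkeeping: many cross-terms are generated by the radial and spherical integrations by parts and by the metric curvature corrections, and each must be checked to carry exactly the right power of $\tau$ and of $r$ for the absorption to succeed. As the paper announces, the lengthier spherical computations will be sent to the appendix.
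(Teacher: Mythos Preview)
Your proposal is correct and follows essentially the same route as the paper: conjugate the full operator (with $V$ and $W$ already included) by $e^{\tau\phi}$ in the logarithmic radial variable, split into a ``symmetric'' and an ``antisymmetric'' piece, and harvest the positivity from the cross term via repeated integration by parts in $t$ and $\theta$; the key point you identify---that placing $W$ and $V$ inside the splitting makes the errors linear in $\|W\|_{\Cc^1}$, $\|V\|_{\Cc^1}$ rather than quadratic---is exactly the mechanism the paper exploits.

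There are two technical differences worth flagging. First, the paper does \emph{not} pre-conjugate by $r^{(n-2)/2}$; instead it keeps the $(n-2+\partial_t\ln\sqrt\gamma)\partial_t u$ term, peels it off by a triangle inequality into a remainder $I\!I$, and absorbs $I\!I$ at the very end. Second, the paper carries out the whole computation in a weighted norm $\|v\|_f^2=\int|v|^2\sqrt\gamma\,f'^{-3}\,dt\,d\theta$; the extra $f'^{-3}$ factor is what makes the many integrations by parts close up cleanly (each $\partial_t$ hitting $f'^{-3}$ produces another $f''$, matching the commutator terms). Your $r^{(n-2)/2}$-conjugation is a legitimate alternative that trades this weighted-norm bookkeeping for a cleaner principal symbol, and the paper's choice of handling $r^{-n/2}$ by the substitution $\tau\mapsto\tau+\tfrac n2$ at the end plays the same role. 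Either route works; just be aware that the paper's version is organized around the $f'^{-3}$ weight rather than the conformal conjugation.
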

\vspace{0,4cm}
Under the additional assumption that $\mathrm{supp(u)}$ is far enough from $x_0$ we have the following 

\begin{cor}
\label{cics}
 Adding to the setting of Theorem \ref{tics} the supplementary assumption that 
 \[\mathrm{supp}(u)\subset\{x\in M; r(x)\geq\delta>0\},\] 
 then we have
\begin{equation}\label{Siv2}%\begin{array}{rcc}
\begin{split}
C\left\|r^2e^{\tau\phi}\left(\Delta u+V\cdot \nabla u +Wu \right)\right\|%&
&\geq%& 
\tau^\frac{3}{2}\left\|r^{\frac{\varepsilon}{2}}e^{\tau\phi}u\right\| \\
+\ \tau^{\frac{1}{2}}\delta^{\frac{1}{2}}\left\|r^{-\frac{1}{2}}e^{\tau\phi}u\right\|%&
&+
%&
 \tau^{\frac{1}{2}}\left\|r^{1+\frac{\varepsilon}{2}}e^{\tau\phi}\nabla u\right\|.
\end{split}
\end{equation}
%\end{array}\end{equation}
\end{cor}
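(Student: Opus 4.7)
The strategy is to deduce the new term on the right-hand side of \eqref{Siv2} from Theorem \ref{tics} by coupling the Carleman estimate with a Hardy-type inequality that becomes available once $\mathrm{supp}(u)$ is bounded away from $x_0$. Since Theorem \ref{tics} applied to $u$ already produces the first and third terms of \eqref{Siv2}, it is enough to establish the additional bound
\[
\tau^{1/2}\delta^{1/2}\|r^{-1/2}e^{\tau\phi}u\|\leq C\,\|r^2 e^{\tau\phi}(\Delta u + V\cdot\nabla u + Wu)\|.
\]

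The key intermediate step will be a Hardy-type inequality produced by an integration by parts with the radial vector field. Because $u\in C_0^\infty(B_{R_0}(x_0)\setminus\{x_0\})$ is compactly supported, the divergence theorem gives $\int_M \mathrm{div}\bigl(\partial_r(e^{2\tau\phi}|u|^2)\bigr)\,dv = 0$. Expanding the divergence, using $\phi'(r) = -1/r+\varepsilon r^{\varepsilon-1}$, absorbing the $\varepsilon$-correction by choosing $R_0$ small enough, and applying Cauchy--Schwarz, one obtains
\[
\tau\,\|r^{-1/2}e^{\tau\phi}u\|\leq C\,\|r^{1/2}e^{\tau\phi}\partial_r u\|,
\]
which is valid for every compactly supported $u$, with no hypothesis at $r=\delta$. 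Bringing in the support condition through the pointwise inequality $\delta^{1/2}r^{1/2}\leq r$ valid on $\mathrm{supp}(u)\subset\{r\geq\delta\}$, and multiplying by $\delta^{1/2}/\tau^{1/2}$, one then gets
\[
\tau^{1/2}\delta^{1/2}\|r^{-1/2}e^{\tau\phi}u\|\leq \frac{C}{\tau^{1/2}}\|r\,e^{\tau\phi}\nabla u\|,
\]
so that it remains only to control the right-hand side by the left-hand side of the Carleman estimate through Theorem \ref{tics}.

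The hard part will be precisely this final step: Theorem \ref{tics} controls $\|r^{1+\varepsilon/2}e^{\tau\phi}\nabla u\|$, whose weight differs from $r$ by a factor $r^{\varepsilon/2}$ that is small near the origin. A naive pointwise comparison $r\leq \delta^{-\varepsilon/2}r^{1+\varepsilon/2}$ introduces an unwanted factor $\delta^{-\varepsilon/2}$, in conflict with the $\delta$-independent constant claimed in Corollary \ref{cics}. The correct approach is therefore likely to build $\delta$ directly into the radial multiplier in the Hardy step---for instance replacing $\partial_r$ by $(r-\delta)\partial_r$, which is nonnegative on $\mathrm{supp}(u)$ and exploits the infinite-order vanishing of $u$ at $r=\delta$---so that the desired $\delta^{1/2}$ factor is generated at the integration-by-parts stage, after which the remainder of the argument appeals to Theorem \ref{tics} with the weight $r^{1+\varepsilon/2}$ without any further penalty.
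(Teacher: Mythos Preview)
Your proposal is incomplete, and you have correctly located the gap yourself: after the Hardy step you need $\tau^{-1/2}\|r\,e^{\tau\phi}\nabla u\|$ controlled by the Carleman left-hand side, whereas Theorem~\ref{tics} only delivers $\tau^{1/2}\|r^{1+\varepsilon/2}e^{\tau\phi}\nabla u\|$. Since $r$ is small on $\mathrm{supp}(u)$, one has $r\geq r^{1+\varepsilon/2}$, so the pointwise comparison goes the wrong way and forces a factor $\delta^{-\varepsilon/2}$, which is not allowed. Your suggested remedy---replacing the radial multiplier by $(r-\delta)\partial_r$---does not close this gap: on $\{r\geq\delta\}$ one has $0\leq r-\delta\leq r$, so the resulting gradient term is still $\|r\,e^{\tau\phi}\partial_r u\|$ (or worse), and the same $r^{\varepsilon/2}$ deficit persists when you try to feed it back into Theorem~\ref{tics}. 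Nothing about the vanishing of $u$ near $r=\delta$ repairs the weight discrepancy at scales $\delta\ll r\ll 1$.

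The paper resolves this not by post-processing Theorem~\ref{tics}, but by returning to the intermediate inequality \eqref{ssu} (equivalently \eqref{bla}) in its proof, \emph{before} the weakening to \eqref{premiermars1} and the un-conjugation. There, for the conjugated function in logarithmic coordinates $t=\ln r$, one has the full-strength radial term
\[
CI\;\geq\;\tau\int|\partial_t u|^2 f'^{-3}\sqrt{\gamma}\,dt\,d\theta
\]
with \emph{no} $|f''|$ factor---this is exactly the $r^{\varepsilon/2}$ you are missing. A one-line Hardy inequality in $t$ (integrate $\partial_t(u^2)e^{-t}$ by parts, use $|\partial_t\ln\sqrt\gamma|\leq Ce^t$, then Cauchy--Schwarz) gives
\[
c\int u^2 e^{-t}\sqrt{\gamma}\,dt\,d\theta\;\leq\; e^{-T_1}\int(\partial_t u)^2\sqrt{\gamma}\,dt\,d\theta\;=\;\delta^{-1}\int(\partial_t u)^2\sqrt{\gamma}\,dt\,d\theta,
\]
using only that $t\geq T_1=\ln\delta$ on $\mathrm{supp}(u)$. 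This yields $CI\geq\tau\delta\int e^{-t}|u|^2 f'^{-3}\sqrt{\gamma}\,dt\,d\theta$, and the end-of-proof conversion (back to $r$, un-conjugation, $\tau\mapsto\tau+\tfrac{n}{2}$) produces the extra $\tau^{1/2}\delta^{1/2}\|r^{-1/2}e^{\tau\phi}u\|$ term with a $\delta$-independent constant. In short: the corollary is not a consequence of the \emph{statement} of Theorem~\ref{tics}, but of a stronger intermediate estimate hidden in its \emph{proof}.
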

\begin{Rq} 
 One should note that, contrary to the corresponding result of Donnelly and Fefferman \cite{DF1}, we are not able to claim that the constants 
appearing above  depend only on an upper bound of the absolute value of the sectional curvature. This comes from the fact that,
 working in polar coordinates, we will have to handle terms containing spherical derivative of the  metric during the proof of Theorem \ref{tics}.
 See in particular the computation of $I_3$ in the appendix.\end{Rq}
% we can not show that the constants depend only on the sectional curvatures of $(M,g)$.!!!!!!!!!!!!!!!!!!!!!!!!!!!!!!!!!!!!!!!!!!!!!!!!!!!

\begin{Rq}We will proceed to the proof with the assumption that all functions are real valued.
 However it can be easily seen that the same inequality holds with hermitian product for complex valued functions.\end{Rq}
\begin{proof}[{\bf Proof of Theorem \ref{tics}}]
We now introduce the polar geodesic coordinates $(r,\theta)$ near $x_0$.
 Using Einstein notation, the Laplace operator takes the form 
\[r^2\Delta u=r^2\partial_r^2u+r^2\left(\partial_r\ln(\sqrt{\gamma})+\frac{n-1}{r}\right)\partial_ru+
\frac{1}{\sqrt{\gamma}}\partial_i(\sqrt{\gamma}\gamma^{ij}\partial_ju),\]
where $\displaystyle{\partial_i=\frac{\partial}{\partial\theta_i}}$ and for each fixed $r$,  $\ \gamma_{ij}(r,\theta)$  
is a metric on \: $\Sb^{n-1}$, and we write $\displaystyle{\gamma=\mathrm{det}(\gamma_{ij})}$.
Since $(M,g)$ is smooth, we have for $r$ small enough :  
%\begin{eqnarray}
\begin{equation}\label{m1}
 \begin{split}
\partial_r(\gamma^{ij})&\leq C (\gamma^{ij})\ \ \ \mbox{(in the sense of tensors)}; 
 \\
|\partial_r(\gamma)|&\leq C;\\
C^{-1}\leq\gamma&\leq C.
\end{split}
\end{equation}
% \end{eqnarray}

\non Now  we set  $r=e^t$. In these new variables, we  write : 
\begin{align*}
e^{2t}\Delta u&=  \partial_t^2u+(n-2+\partial_t\mathrm{ln}\sqrt{\gamma})\partial_tu
+\frac{1}{\sqrt{\gamma}}\partial_i(\sqrt{\gamma}\gamma^{ij}\partial_ju),\\
e^{2t}V&=  e^{2t}V_t\partial_t+e^{2t}V_i\partial_i .
\end{align*}
Notice that we will consider the function $u$ to have support in  $]-\infty,T_0[\times\mathbb{S}^{n-1},$ 
where $|T_0|$ will be chosen large enough.
The conditions (\ref{m1}) become
\begin{equation}\label{m2}
\begin{split}
\partial_t(\gamma^{ij})&\leq Ce^t (\gamma^{ij})\nonumber\  \ \ \mbox{(in the sense of tensors)};  \\
|\partial_t(\gamma)|&\leq Ce^t;\\
C^{-1}\leq\gamma &\leq  C. \nonumber
\end{split}
\end{equation}
%\end{eqnarray}
Now we introduce the conjugate operator : 
\begin{equation}%\begin{array}{rcl}
L_\tau(u)=e^{2t}e^{\tau\phi}\Delta(e^{-\tau\phi}u)+e^{2t}e^{\tau\phi}g(V,\nabla(e^{-\tau\phi}u))+e^{2t}Wu,%\end{array}
\end{equation}
and we compute $L_\tau(u)$ : 
\begin{align*}
L_\tau(u)=\ & {} \partial^2_tu+\left(2\tau f^\prime+e^{2t}V_t+n-2+\partial_t\mathrm{ln}\sqrt{\gamma}\right)\partial_tu+e^{2t}V_i\partial_iu\\
               +\  & {}\left(\tau^2f^{\prime^2}+\tau f^{\prime}V_te^{2t}+\tau f^{\prime\prime}+(n-2)\tau f^{\prime}+\tau\partial_t\mathrm{ln}\sqrt{\gamma}f^{\prime}\right)u\\
               +\ & {} \Delta_\theta u+e^{2t}Wu,
\end{align*}
with \[\Delta_\theta u=\frac{1}{\sqrt{\gamma}}\partial_i\left(\sqrt{\gamma}\gamma^{ij}\partial_ju\right).\]

\noindent It will be useful for us to introduce the following $L^2$ norm on $]-\infty,T_0[\times\Sb^{n-1} $: 
\[\|v\|_f^2=\int_{]-\infty,T_0[\times\Sb^{n-1}} |v|^2\sqrt{\gamma}{f^{\prime}}^{-3}dtd\theta,\] where $d\theta$ is the usual measure on $\Sb^{n-1}$.
The corresponding inner product is denoted by  $\left\langle\cdot,\cdot\right\rangle_f$ , \emph{i.e} \[\langle u,v\rangle_f = \int uv\sqrt{\gamma}{f^{\prime}}^{-
3}dtd\theta.\] 
\noindent We will estimate from below $\|L_\tau u\|^2_f$ by using elementary algebra and integrations by parts. 
We are concerned, in the computation, by the power of $\tau$ 
and  exponential decay when $t$ goes to $-\infty$. We point out that we 
have \[e^{t}(V_t+V_i+\partial_\alpha V_t +\partial_\beta V_i)\leq C\left\|V\right\|_{\Cc^1}\] with 
the convention that $\partial_\alpha ,\partial_\beta =\left\{\partial_t ,\partial_1,\ldots ,\partial_{n-1} \right\}$. First note that by triangular inequality one has
\begin{equation}
\|L_\tau(u)\|^2_f\geq \frac{1}{2}I-I\!I ,
\end{equation}
with 
%\begin{equation}\begin{array}{rcl}
\begin{multline}
I=\left\|\partial^2_tu+\Delta_\theta u+(2\tau f^\prime+e^{2t}V_t)\partial_tu+e^{2t}V_i\partial_iu\phantom{f^{\prime^3}}\!\!\!\right.\\
\left.+\ (\tau^2f^{\prime^2}+\tau f^\prime e^{2t}V_t+ (n-2)\tau f^\prime+e^{2t}W)u\right\|^2_f,
\end{multline}   
and 
\begin{equation}\label{II}
I\!I=\left\|\tau f^{\prime\prime}u+\tau\partial_t\mathrm{ln}\sqrt{\gamma}f^{\prime}u+
(n-2)\partial_tu+\partial_t\ln\sqrt{\gamma}\partial_tu\right\|^2_f.\ \ \ \ \ \ \ %\right\|_f%\\
\end{equation}
%&+&
%
%$\left\|

%\end{array}\end{equation}
\non We will be able to absorb $I\!I$ later. 
Now, we want to find a lower bound for $I$. Therefore, we start by computing it :
\[I=I_1+I_2+I_3,\] with 
\begin{equation}
\begin{split}
I_1&=\left\|\partial^2_tu+(\tau^2f^{\prime^2}+\tau f^{\prime}e^{2t}V_t+(n-2)\tau f^\prime +e^{2t}W)u+\Delta_\theta u\right\|_f^2,\\
I_2&=\left\|(2\tau f^\prime+e^{2t}V_t)\partial_tu+e^{2t}V_i\partial_iu\right\|_f^2, \\
I_3&=2 \left\langle \partial^2_tu+(\tau^2f^{\prime^2}+\tau f^{\prime}e^{2t}V_t+(n-2)\tau f^\prime +e^{2t}W)u+\Delta_\theta u \right.\\
& \left.\ \ \ \ \ \ \ \ \ \ \ \ \ \ \ \ \ \ \ \ \ \ \ \ \ \ \ \ \ \ \ \ \ \ \ \ ,(2\tau f^\prime+e^{2t}V_t)\partial_tu+e^{2t}V_i\partial_iu \right\rangle_f .
\end{split}
\end{equation}
%\end{eqnarray*}
We will split the computation into three parts corresponding to the $I_i$ for $i=1,2,3$.
\newline
\newline

\noindent {\bf Computation of $I_1$.}\\
 Let $\rho>0$ be a small number to be chosen later. Since  $|f^{\prime\prime}|\leq1$ and $\tau\geq1$, we have : \newline
 \begin{equation}
 \label{I_1bis}
 I_1\geq\frac{\rho}{\tau}I_1^\prime,\end{equation}
 where $I_1^\prime$ is defined by :
 \begin{equation}I_1^\prime=\left\|\sqrt{|f^{\prime\prime}|}\left[\partial_t^2 u+(\tau^2f'^2+\tau f' e^{2t}V_t+(n-2)\tau f^\prime +e^{2t}W)u+\Delta_\theta u\right]\right\|_f^2 .\end{equation}
Now, we decompose $I_1^\prime$ into three parts
\begin{equation}\label{I1prime}I_1^\prime=K_1+K_2+K_3,\end{equation} 
with 
%\begin{equation}\label{Ki}
%\begin{array}{rcl}
\begin{align}
 \label{K1} K_1=& {} \left\|\sqrt{|f^{\prime\prime}|}\left(\partial_t^2u+\Delta_\theta u\right)\right\|_f^2, \\
\label{K2} K_2=& {} \left\|\sqrt{|f^{\prime\prime}|}\left(\tau^2f^{\prime^2}+\tau f' e^{2t}V_t+(n-2)\tau f^{\prime}+e^{2t}W\right)u\right\|_f^2,  \\
\label{K3}K_3=&{}2\left\langle\left(\partial_t^2u+\Delta_\theta u\right)\left|f^{\prime\prime}\right|,\left(\tau^2f^{\prime^2}+\tau f' e^{2t}V_t+(n-2)\tau f^{\prime}+e^{2t}W\right)u\right\rangle_f.
\end{align}
%\end{array}\end{equation}
We just ignore $K_1$ since it is positive. To estimate $K_2$, we first note that 
\begin{equation*}K_2 \geq \dfrac{\tau^4}{2} \left\|\sqrt{|f^{\prime\prime}|}f^{\prime^2}u\right\|_f^2- \left\|\sqrt{|f^{\prime\prime}|} \left(\tau f' e^{2t}V_t+(n-2)\tau f^{\prime}+e^{2t}W\right)u\right\|_f^2.\end{equation*}
On the other hand, using that $\tau\geq C(1+\|V\|_{\Cc^1}+\|W\|_{\Cc^1}^{\frac{1}{2}})$ and that $f^\prime $ is close to $1$, we have
\begin{multline*}\left\|\sqrt{|f^{\prime\prime}|}\left(\tau f' e^{2t}V_t+(n-2)\tau f^{\prime}+e^{2t}W\right)u\right\|_f^2  \\ \leq c\tau^4\left\|\sqrt{|f^{\prime\prime}|}e^tu\right\|_f^2+ 
\tau^2\left\|\sqrt{|f^{\prime\prime}|}u\right\|^2_f .\end{multline*}
Therefore  using the assumptions on $\tau$, and the exponential decay at $-\infty$, we have for $T_0$ large enough,
 that every other term in $K_2$ can be absorbed in $\tau^4\|\sqrt{|f^{\prime\prime}|}u\|$. 
That is : 
\begin{equation}
\label{K_2}
K_2\geq c\tau^4\int|f^{\prime\prime}||u|^2f^{\prime^{-3}}\sqrt{\gamma}dtd\theta .
\end{equation}  
Now, we derive a suitable lower bound for $K_3$. Integrating by parts gives : 
\begin{equation}\label{K_3}\begin{split}
K_3&=2\int f^{\prime\prime}\left(\tau^2f^{\prime^2}+\tau f' e^{2t}V_t+(n-2)\tau f^{\prime}+e^{2t}W\right)|\partial_tu|^2f^{\prime^{-3}}\sqrt{\gamma}dtd\theta\\
&+2\int \partial_t\left[f^{\prime\prime}\left(\tau^2f^{\prime^2}+\tau f' e^{2t}V_t+(n-2)\tau f^{\prime}+e^{2t}W \right)\right]\partial_tuu\sqrt{\gamma}f^{\prime^{-3}}dtd\theta \\
&-6\int \left(f^{\prime\prime^2}f^{\prime^{-1}}\left(\tau^2f^{\prime^2}+\tau f' e^{2t}V_t+(n-2)\tau f^{\prime}+e^{2t}W \right)\right)\partial_tuu\sqrt{\gamma}f^{\prime^{-3}}dtd\theta \\
&+2\int f^{\prime\prime}\left(\tau^2f^{\prime^2}+\tau f' e^{2t}V_t+(n-2)\tau f^{\prime}+e^{2t}W\right)\partial_t\mathrm{ln}\sqrt{\gamma}\partial_tuuf^{\prime^{-3}}\sqrt{\gamma}dtd\theta\\
&+2\int f^{\prime\prime}\left(\tau^2f^{\prime^2}+\tau f' e^{2t}V_t+(n-2)\tau f^{\prime}+e^{2t}W\right)|D_\theta u|^2f^{\prime^{-3}}\sqrt{\gamma}dtd\theta\\
&+2\int f^{\prime\prime}e^{2t}\partial_iW\cdot\gamma^{ij}\partial_juuf^{\prime^{-3}}\sqrt{\gamma}dtd\theta \\
&+ 2\tau \int  f^{\prime\prime}f^\prime e^{2t}\partial_i V_t\cdot\gamma^{ij}\partial_juuf^{\prime^{-3}}\sqrt{\gamma}dtd\theta,
\end{split}\end{equation}
where $|D_\theta u|^2$ stands for
\[|D_\theta u|^2=\partial_iu\gamma^{ij}\partial_ju .\]
 The condition $\tau\geq C(1+\|V\|_{\Cc^1}+\|W\|_{\Cc^1}^{\frac{1}{2}})$, Young's inequality and the fact that $f^\prime$ is close to $1$ imply 
\begin{multline*}\int |f^{\prime\prime}|e^{2t}|(\partial_iW +f^\prime \partial_i V_t)\gamma^{ij}\partial_juu|f^{\prime^{-3}}\sqrt{\gamma}dtd\theta \\
\leq c\tau^2\int |f^{\prime\prime}|(|D_\theta u|^2+|u|^2)f^{\prime^{-3}}\sqrt{\gamma}dtd\theta.\end{multline*}
Now since  $2\partial_tuu\leq u^2+|\partial_tu|^2$, we can  use conditions \eqref{f} and \eqref{m2} to get

\begin{equation}
\label{K_3bis}
 K_3\geq-c\tau^2\int|f^{\prime\prime}|\left(|\partial_tu|^2+|D_\theta u|^2+|u|^2\right)f^{\prime^{-3}}\sqrt{\gamma}dtd\theta .\\
\end{equation}
Therefore, inserting \eqref{K_2}, \eqref{K_3bis} in \eqref{I1prime} (recall that $K_1\geq 0$), we have
\begin{multline}I_1^\prime \geq -c\tau^2\int|f^{\prime\prime}|\left(|\partial_tu|^2+|D_\theta u|^2+|u|^2\right)f^{\prime^{-3}}\sqrt{\gamma}dtd\theta \\ +c\tau^4\int|f^{\prime\prime}||u|^2f^{\prime^{-3}}\sqrt{\gamma}dtd\theta. \end{multline}
From the definition of $I_1^\prime$ (see \eqref{I_1bis}), we get %Therefore, recalling that $I_1 \geq \dfrac{\rho}{\tau}\left(K_2 +K_3\right)$, combining 
%Now since  $K_1\geq 0$  we can state that,  
\begin{multline}\label{I_1}%\begin{split}
I_1\geq-\rho c\tau\int|f^{\prime\prime}|\left(|\partial_t u|^2+|D_\theta u|^2\right)f^{\prime^{-3}}\sqrt{\gamma}dtd\theta\\
 +C\tau^{3}\rho\int|f^{\prime\prime}||u|^2f^{\prime^{-3}}\sqrt{\gamma}dtd\theta.
\end{multline}%\end{equation}
\newline
{\bf Computation of $I_2$.}\\
We begin by recalling that 
\[I_2=\left\|(2\tau f^\prime+e^{2t}V_t)\partial_tu+e^{2t}V_i\partial_iu\right\|_f^2.\] 
\noindent %Let $\rho_2 >0$ be a small number wich will be determined in the following.
In the same way as for $I_1$, using that $\tau \geq 1$, we have
\[I_2 \geq \dfrac{1}{\tau}I_2.\]
Using the triangular inequality, one has
\[I_2 \geq \dfrac{1}{2}\left\|2\tau f^\prime \partial_t u\right\|_f^2-\left\|e^{2t}V_t\partial_tu+e^{2t}V_i\partial_iu\right\|_f^2.\]
Now, using the assumptions on $\tau$, we note that \[\left\|e^{2t}V_t\partial_tu+e^{2t}V_i\partial_iu\right\|_f^2\leq 2\tau^2\|e^{t}\partial_tu\|_f^2+2\tau^2\|e^{t}D_\theta u|\|^2_f .\]
From the last three previous inequalities and since $e^t$ is small for $T_0$ largely negative, 
we see that the following estimate holds
\begin{equation}
\label{I_2}
 I_2\geq c\tau \|\partial_t u\|_f^2 -c\tau \|e^t D_\theta u\|^2_f .
 \end{equation}

\noindent {\bf Computation of $I_3$.}\\
Since this computation is quite lengthy, we send it to the Appendix. There, we show that
 \begin{equation}\label{I_3}\begin{split}
 I_3 &\geq  3\tau \int\left|f^{\prime\prime}\right|\left|D_\theta u\right|^2f^{\prime^{-3}}\sqrt{\gamma}dtd\theta -c\tau^3\int e^t|u|^2f^{\prime^{-3}}\sqrt{\gamma}dtd\theta\\
 &\quad-c\tau\int\left|f^{\prime\prime}\right|\left|\partial_t u\right|^2f^{\prime^{-3}}\sqrt{\gamma}dtd\theta
-c\tau^2\int\left|f^{\prime\prime}\right| |u|^2f^{\prime^{-3}}\sqrt{\gamma}dtd\theta
.
 \end{split}\end{equation}

\noindent {\bf Lower bound for $L_\tau u$.}\\
 
Now recalling that $I=I_1+I_2+I_3$ and using \eqref{I_1}, \eqref{I_2} and \eqref{I_3}, we obtain
\begin{equation}
\label{pro}
\begin{split}
I&\geq  c \tau\int|\partial_t u|^2 f^{\prime^{-3}}\sqrt{\gamma}dtd\theta +3\tau \int\left|f^{\prime\prime}\right|\left|D_\theta u\right|^2f^{\prime^{-3}}\sqrt{\gamma}dtd\theta \nonumber\\
& + C\tau^3\rho\int|f^{\prime\prime}||u|^2 f^{\prime^{-3}}\sqrt{\gamma}dtd\theta-c\rho\tau\int|f^{\prime\prime}||D_\theta u|^2f^{\prime^{-3}}\sqrt{\gamma}dtd\theta \nonumber\\
&- c\tau\int e^{2t}|D_\theta u|^2 f^{\prime^{-3}}\sqrt{\gamma}dtd\theta-c\tau^3\int e^t|u|^2f^{\prime^{-3}}\sqrt{\gamma}dtd\theta \nonumber\\
&-c\tau\int|f^{\prime\prime}| |\partial_t u|^2 f^{\prime^{-3}}\sqrt{\gamma}dtd\theta- c\tau^2\int|f^{\prime\prime}||u|^2 f^{\prime^{-3}}\sqrt{\gamma}dtd\theta.
\end{split}
\end{equation}

\noindent Now we want to derive a lower bound for $I$. %$I=I_1+I_2+I_3$. 
Then one needs to check that every non-positive term in the right hand side of \eqref{pro} can be absorbed. 
\\We  first fix $\rho$ small enough (\emph{i.e.} $\rho\leq \frac{2}{c}$)  such that
\[\rho c\tau\int|f^{\prime\prime}|\cdot|D_\theta u|^2{f^{\prime}}^{-3}\sqrt{\gamma}dtd\theta\leq 2\tau\int|f^{\prime\prime}|\cdot|D_\theta u|^2f^{\prime^{-3}}\sqrt{\gamma}dtd\theta\]
where $c$ is the constant appearing in \eqref{pro}. Now the other negative terms %in the last integral 
of \eqref{pro} can then be absorbed by comparing powers of $\tau$ and decay rate at $-\infty$. 
Indeed conditions \eqref{f} imply that $e^t$ is small compared to $|f^{\prime\prime}|$.\newline
%we can absorb   $-c\tau^3e^t|u|^2$ in $C\tau^{3}\rho|f^{\prime\prime}||u|^2$.
% 
% 
%  
\noindent Thus we obtain :
\begin{equation}\label{ssu}\begin{split}
 C I&\geq \tau\int|\partial_t u|^2f^{\prime^{-3}}\sqrt{\gamma}dtd\theta+\tau\int|f^{\prime\prime}||D_\theta u|^2f^{\prime^{-3}}\sqrt{\gamma}dtd\theta\\
&\quad + \tau^{3}\int|f^{\prime\prime}||u|^2f^{\prime^{-3}}\sqrt{\gamma}dtd\theta .
\end{split}\end{equation}
Now  we can check that $I\!I$ can be absorbed in $I$ for  $|T_0|$ and $\tau$ large enough. 
Indeed from \eqref{II}, using \eqref{f} and \eqref{m2} ($|\partial_t \mathrm{ln}\sqrt{\gamma}|\leq C e^t$), one gets 
\begin{equation}\begin{split}
I\!I&= \left\|\tau f^{\prime\prime}u+\tau\partial_t\mathrm{ln}\sqrt{\gamma}f^{\prime}u+(n-2)\partial_tu+\partial_t\ln\sqrt{\gamma}\partial_tu\right\|^2_f \\
&\leq \tau^2\int|f^{\prime\prime}|^2|u|^2 f^{\prime^{-3}}\sqrt{\gamma}dtd\theta+\tau^2\int e^{2t}|u|^2f^{\prime^{-3}}\sqrt{\gamma}dtd\theta \\
&\quad+ C\int|\partial_t u|^2 f^{\prime^{-3}}\sqrt{\gamma}dtd\theta.
\end{split}\end{equation}
And each term in the right hand side can easily be absorbed in \eqref{ssu}. 
Then we obtain 
\begin{equation}\label{bla}\|L_\tau u\|_f^2\geq C\tau^3\|\sqrt{|f^{\prime\prime}|} u\|_f^2+C\tau\|\partial_t u\|_f^2
+C\tau\|\sqrt{|f^{\prime\prime}|}D_\theta u\|_f^2 .\end{equation}
Note that, since $\sqrt{|f^{\prime\prime}|}\leq1$, one has
 \begin{equation}
 \label{premiermars1}
 \|L_\tau u\|_f^2\geq C\tau^3\|\sqrt{|f^{\prime\prime}|} u\|_f^2+c\tau\|\sqrt{|f^{\prime\prime}|}\partial_t u\|_f^2
+C\tau\|\sqrt{|f^{\prime\prime}|}D_\theta u\|_f^2,\end{equation}
and the constant $c$ can be chosen arbitrarily smaller than $C$. \\

\noindent {\bf End of the proof.}\\
If we set  $v=e^{-\tau\phi}u$ and use the triangular inequality on the second right-sided term of \eqref{premiermars1}, then we have 
\begin{multline}\left\|e^{2t}e^{\tau\phi}(\Delta v+V\cdot\nabla v+Wv)\right\|_f^2\geq C\tau^3\left\|\sqrt{|f^{\prime\prime}|}e^{\tau\phi} v\right\|_f^2
-c\tau^3\left\|\sqrt{|f^{\prime\prime}|}f^\prime e^{\tau\phi} v\right\|_f^2\\
+
\frac{c}{2}\tau\left\|\sqrt{|f^{\prime\prime}|}e^{\tau\phi}\partial_t v\right\|_f^2+C\tau\left\|\sqrt{|f^{\prime\prime}|}e^{\tau\phi} D_\theta v\right\|_f^2\end{multline}
Finally since $f^\prime$ is close to 1 one can absorb the negative term to obtain
 
\begin{multline}\left\|e^{2t}e^{\tau\phi}(\Delta v+V\cdot\nabla v+Wv)\right\|_f^2\geq C\tau^3\left\|\sqrt{|f^{\prime\prime}|}e^{\tau\phi} v\right\|_f^2\\
+
C\tau\left\|\sqrt{|f^{\prime\prime}|}e^{\tau\phi}\partial_t v\right\|_f^2+C\tau\left\|\sqrt{|f^{\prime\prime}|}e^{\tau\phi} D_\theta v\right|_f^2\end{multline}
It remains to get back to the usual $L^2$ norm. First note that since $f^\prime$ is close to 1,
 we can get the same estimate without the term $(f^\prime)^{-3}$ in the integrals. 
Recall that in polar coordinates $(r,\theta)$ the volume element 
is $r^{n-1}\sqrt{\gamma}drd\theta$, we can deduce from \eqref{ssu} that : 
\begin{multline}
  \|r^2e^{\tau\phi}(\Delta v+V\cdot\nabla v+Wv)r^{-\frac{n}{2}}\|^2\geq C\tau^3\|r^\frac{\varepsilon}{2}e^{\tau\phi}vr^{-\frac{n}{2}}\|^2\\
+C\tau\|r^{1+\frac{\varepsilon}{2}}e^{\tau\phi}\nabla vr^{-\frac{n}{2}}\|^2.
 \end{multline}
Finally one can get rid of the term $r^{-\frac{n}{2}}$ by replacing $\tau$ with $\tau+\frac{n}{2}$. Indeed, from 
$e^{\tau\phi}r^{-\frac{n}{2}}=e^{(\tau+\frac{n}{2})\phi}e^{-\frac{n}{2}r^\varepsilon}$, one can easily check that, for $r$ small enough  
\[\frac{1}{2}e^{(\tau+\frac{n}{2})\phi}\leq e^{\tau\phi}r^{-\frac{n}{2}}\leq e^{(\tau+\frac{n}{2})\phi}.\] 

\non This achieves the  proof of Theorem \ref{tics}.\\ 
\end{proof}
Next, we demonstrate Corollary \ref{cics}.
\begin{proof}[{\bf Proof of Corollary \ref{cics}.}]
\non Now suppose that  $\mathrm{supp}(u)\subset\{x\in M; r(x)\geq\delta>0\}$ and define $T_1=\ln\delta$.\newline

\noindent Cauchy-Schwarz inequality applied to \[\int\partial_t(u^2)e^{-t}\sqrt{\gamma}dtd\theta=2\int u\partial_tue^{-t}\sqrt{\gamma}dtd\theta\] 
gives
\begin{equation}\label{d1}
 \int\partial_t(u^2)e^{-t}\sqrt{\gamma}dtd\theta\leq 2\left(\int\left(\partial_tu\right)^2e^{-t}\sqrt{\gamma}dtd\theta \right)^{\frac{1}{2}}
\left(\int u^2e^{-t}\sqrt{\gamma}dtd\theta\right)^{\frac{1}{2}}.
\end{equation}
On the other hand, integrating by parts gives
 \begin{equation}
      \int\partial_t(u^2)e^{-t}\sqrt{\gamma}dtd\theta = \int u^2e^{-t}\sqrt{\gamma}dtd\theta-\int u^2e^{-t}\partial_t(\ln(\sqrt{\gamma}))\sqrt{\gamma}dtd\theta.
     \end{equation}
Now since  $|\partial_t\ln\sqrt{\gamma}|\leq Ce^t$ for $|T_0|$ large enough we can deduce :
\begin{equation}\label{d2}
 \int\partial_t(u^2)e^{-t}\sqrt{\gamma}dtd\theta \geq c  \int u^2e^{-t}\sqrt{\gamma}dtd\theta.
\end{equation}
Combining \eqref{d1}, \eqref{d2} and by the assumption on $\mathrm{supp}(u)$, we find  
\begin{equation*}\label{prout}\begin{split}
 c^2 \int u^2e^{-t}\sqrt{\gamma}dtd\theta&\leq 4\int\left(\partial_tu\right)^2e^{-t}\sqrt{\gamma}dtd\theta\\
&\leq4e^{-T_1}\int\left(\partial_t u\right)^2\sqrt{\gamma}dtd\theta.\end{split}\end{equation*}

\noindent Finally, dropping all terms except  $\tau \int|\partial_t u|^2f^{\prime^{-3}}\sqrt{\gamma}dtd\theta$  in \eqref{ssu}  gives :

\begin{eqnarray*}
CI\geq \tau \delta \int e^{-t}|u|^2{f^\prime}^{-3}\sqrt{\gamma}dtd\theta.
\end{eqnarray*}\vspace{0,5cm}
Inequality \eqref{ssu} can then be replaced by :  
\begin{equation}\begin{split}
 I &\geq C\tau\int|\partial_t u|^2f^{\prime^{-3}}\sqrt{\gamma}dtd\theta+C\tau\int|f^{\prime\prime}|\cdot|D_\theta u|^2f^{\prime^{-3}}\sqrt{\gamma}dtd\theta\\
&+ C\tau^{3}\int|f^{\prime\prime}|\cdot|u|^2f^{\prime^{-3}}\sqrt{\gamma}dtd\theta+C\tau \delta\int e^{-t}|u|^2{f^\prime}^{-3}\sqrt{\gamma}dtd\theta.
\end{split}\end{equation}
The rest of the proof follows in a way similar to the last part of the proof of Theorem \ref{tics}. 
\end{proof}
\section{Vanishing order}

We now proceed to establish an upper bound on the vanishing order of solutions to \eqref{E}, from our Carleman estimate. This is inspired by \cite{DF1}. 
We choose to establish a doubling inequality. We recall that doubling inequality implies vanishing order estimate. 
Before proceeding, we would like to emphasize that if  $u\in H^1(B_r(x_0))$, 
by standard elliptic regularity theory, one has that $u\in H^2_\mathrm{loc}(B_r(x_0))$ (see for example \cite{GT} Theorem 8.8). 
Therefore, by density,  we see that we can apply inequality \eqref{Siv2} of Corollary \ref{cics} to $\chi u$ for $\chi$ a cut-off function null 
in a neighborhood of $x_0$%  First we treat the case $V=0$ in section 3.1 and 3.2. 
\subsection{Three balls inequality}
We first want to derive from \eqref{Siv2}, a control on the local behavior of solutions in the form of an Hadamard
 three circles type theorem. To obtain such result the basic idea is to apply Carleman estimate to $\chi u$ where 
$\chi$ is an appropriate cut-off function and $u$  a solution of \eqref{E}.  This is standard \cite{B1,JL} and the proof  
adapted  to our weight function  is given for the sake of completeness. 

\begin{prop}[Three balls inequality]\label{tst}
There exist positive constants $R_1$, $C_1$, $C_2$ and $0<\alpha<1$  which depend only on $(M,g)$ such that, if $u$ is a solution to \eqref{E} with $W\in \Cc^1 (M) $ and $V\in \Gamma_1(TM)$,  
then for any $R<R_1$, and any $x_0\in M, $ one has 
\begin{equation}\label{ttc}
\| u\|_{B_R(x_0)}\leq e^ {C\bound}\|u\|_{B_{\frac{R}{2}}(x_0)}^\alpha\| u\|_{B_{2R}(x_0)}^{1-\alpha}.%\:,\:\:
\end{equation}
\end{prop}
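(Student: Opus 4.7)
The plan is the standard Carleman-to-three-balls reduction à la Donnelly--Fefferman: apply Corollary \ref{cics} to $\chi u$ for a carefully chosen radial cutoff $\chi$, split the resulting inequality into contributions coming from the inner and outer regions where $\nabla\chi \neq 0$, and finally optimize in $\tau$.

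\textbf{Choice of cutoff and setup.} I would choose a smooth radial cutoff $\chi$ with $\chi \equiv 1$ on $A_{3R/8,\,3R/2}$, $\chi\equiv 0$ outside $A_{R/4,\,2R}$, and $|\nabla\chi|\leq C/R$, $|\Delta\chi|\leq C/R^{2}$. Then $\chi u$ is supported in $\{r\geq R/4\}$, so for $R\leq R_1$ small enough Corollary \ref{cics} applies with $\delta = R/4$. Since $Pu=0$, the commutator computation yields
\begin{equation*}
P(\chi u) \;=\; (\Delta\chi)\,u \;+\; 2\nabla\chi\cdot\nabla u \;+\; V\cdot\nabla\chi\,u,
\end{equation*}
which is supported only on $A_{R/4,\,3R/8}\cup A_{3R/2,\,2R}$.

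\textbf{Extracting a three-terms inequality.} I would then apply the $\tau^{3/2}$-term of Corollary \ref{cics} to $\chi u$ for $\tau \geq C_1\bound$. Restricting the left-hand side integral to $A_{R/2,R}\subset\{\chi=1\}$ and using the monotonicity of $e^{\tau\phi(r)}$ in $r$ (because $\phi=-f(\ln r)$ with $f$ increasing) gives the lower bound $\tau^{3/2}R^{\varepsilon/2}e^{\tau\phi(R)}\|u\|_{A_{R/2,R}}$. On the right-hand side, the pointwise estimate $|P(\chi u)|\leq C(1+\|V\|_\infty)(|u|/R^{2}+|\nabla u|/R)$ together with the monotonicity of the weight yields
\begin{equation*}
\bigl\|r^{2}e^{\tau\phi}P(\chi u)\bigr\|
\;\leq\; Ce^{\tau\phi(R/4)}\bigl(\|u\|_{B_{R/2}}+R\|\nabla u\|_{A_{R/4,R/2}}\bigr)
+ Ce^{\tau\phi(3R/2)}\bigl(\|u\|_{B_{2R}}+R\|\nabla u\|_{A_{3R/2,2R}}\bigr),
\end{equation*}
with an implicit factor $1+\|V\|_\infty$. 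An annular Caccioppoli inequality for \eqref{E} absorbs the gradient terms into $L^{2}$ norms on the same or slightly larger balls, at the cost of a factor $C\bound$, which will be swallowed into the final $e^{C\bound}$.

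\textbf{Optimization in $\tau$.} Writing $A=\phi(R/4)-\phi(R)>0$ and $B=\phi(R)-\phi(3R/2)>0$ (both depending only on $(M,g)$ and $\varepsilon$), and using $\|u\|_{B_R}\leq\|u\|_{B_{R/2}}+\|u\|_{A_{R/2,R}}$, I obtain
\begin{equation*}
\|u\|_{B_R}\;\leq\; e^{C\bound}\Bigl(e^{\tau A}\|u\|_{B_{R/2}} + e^{-\tau B}\|u\|_{B_{2R}}\Bigr)
\qquad\text{for all }\tau\geq C_1\bound.
\end{equation*}
Setting $\tau_{*}=\frac{1}{A+B}\ln\bigl(\|u\|_{B_{2R}}/\|u\|_{B_{R/2}}\bigr)$ balances the two terms. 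If $\tau_{*}\geq C_1\bound$, the choice $\tau=\tau_{*}$ produces the asserted inequality with $\alpha = B/(A+B)\in(0,1)$. If instead $\tau_{*}<C_1\bound$, then $\|u\|_{B_{2R}}\leq e^{C\bound}\|u\|_{B_{R/2}}$, and the conclusion follows trivially from $\|u\|_{B_R}\leq \|u\|_{B_{2R}}$ together with $\|u\|_{B_{R/2}}\leq\|u\|_{B_{2R}}$.

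\textbf{Main obstacle.} The only non-cosmetic point is to ensure the outer $\nabla\chi$-support lies strictly beyond $r=R$ so that the corresponding weight $e^{\tau\phi(3R/2)}$ is genuinely smaller than the LHS weight $e^{\tau\phi(R)}$; this is what makes the outer coefficient decay in $\tau$ and drives the three balls interpolation. Apart from this, the careful bookkeeping of $\|V\|_{\Cc^1}$ and $\|W\|_{\Cc^1}^{1/2}$ through the Caccioppoli step, in order to keep all implicit constants inside the allowed prefactor $e^{C\bound}$, is the main technical verification required.
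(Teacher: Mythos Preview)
Your strategy coincides with the paper's: cut off, apply Corollary~\ref{cics}, use a Caccioppoli estimate to remove the gradient terms, then optimize in $\tau$. The bookkeeping of the $\|V\|_{\Cc^1}$ and $\|W\|_{\Cc^1}^{1/2}$ factors and the two-case optimization are both fine. There is, however, one slip in which term of \eqref{Siv2} you retain on the left-hand side.

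You keep $\tau^{3/2}\|r^{\varepsilon/2}e^{\tau\phi}\chi u\|$, which on $A_{R/2,R}$ gives the lower bound $\tau^{3/2}(R/2)^{\varepsilon/2}e^{\tau\phi(R)}\|u\|_{A_{R/2,R}}$. After dividing, a prefactor $R^{-\varepsilon/2}$ survives on the right-hand side of your three-term inequality. The proposition asks for constants depending only on $(M,g)$, uniformly in $R<R_1$; the factor $R^{-\varepsilon/2}$ blows up as $R\to 0$ and has nothing to do with the potentials, so it cannot be absorbed into $e^{C\bound}$. Consequently your displayed inequality just before the optimization does not hold with an $R$-independent constant as written.

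The paper avoids this by using instead the middle term of \eqref{Siv2}, namely $\tau^{1/2}\delta^{1/2}\|r^{-1/2}e^{\tau\phi}\chi u\|$ with $\delta=R/4$. On the annular support both $\delta$ and $r$ are comparable to $R$, so $\delta^{1/2}r^{-1/2}$ is bounded below by a universal constant and the scale dependence cancels; this is precisely the reason Corollary~\ref{cics} is stated separately from Theorem~\ref{tics}. With that single substitution your argument goes through and is essentially identical to the paper's (the paper picks $\tau$ so that the outer term is at most half of the left-hand side rather than balancing the two, but that difference is cosmetic).
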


\begin{proof}
Let $x_0$ be a point in $M$. Let $u$ be a solution 
to \eqref{E} and $R$ such that $0<R<\frac{R_0}{2}$ with $R_0$ as in Corollary \ref{tics}. 
Let $\psi\in\Cc^{\infty}_0(B_{2R})$, $0\leq\psi\leq1$, a function with the following properties:
\begin{itemize}
%\item[$\bullet$] $0\leq \psi \leq 1$,
\item $\psi(x)=0$ if $r(x)<\frac{R}{4}$ or  $r(x)>\frac{5R}{3}$,
\item[$\bullet$] $\psi(x)=1$ if $\frac{R}{3}<r(x)<\frac{3R}{2}$,
\item[$\bullet$] $|\nabla\psi(x)|\leq \frac{C}{R}$,
 \item[$\bullet$] $|\nabla^{2}\psi(x) |\leq \frac{C}{R^2}$.
\end{itemize}
First, since the function $\psi u$ is supported in the annulus $A_{\frac{R}{3},\frac{5R}{3}}$, we can apply estimate \eqref{Siv2} of Theorem \ref{tics}. 
In particular, since the quotient between $\frac{R}{3}$ and $\frac{5R}{3}$ doesn't depend on $R$, we have
%\begin{equation}\label{debut}C\left\|r^2e^{\tau\phi}\left(\Delta \psi u+2\nabla u\cdot\nabla\psi\right)\right\|\geq\tau\left\|e^{\tau\phi}\psi u\right\|.\end{equation}
\begin{equation}\label{debutm}C\left\|r^2e^{\tau\phi}\left(\Delta \psi u+2\nabla u\cdot\nabla\psi+V\cdot u\nabla\psi\right)\right\|\geq\tau^{\frac{1}{2}}\left\|e^{\tau\phi}\psi u\right\|.\end{equation}
Notice that 
\[\|r^2e^{\tau\phi}V\cdot u\nabla\psi\|\leq \|V\|_\infty\|r^2e^{\tau\phi}u\nabla\psi\|. \] 
Then, from the properties of $\psi$ and  since $\tau\geq\|V\|_\infty$,  we get 
\begin{align}
\tau^{\frac{1}{2}}\|e^{\tau\phi}\psi u\|
\leq &
\ C\left(\|e^{\tau\phi}u\|_{\frac{R}{4},\frac{R}{3}}+\|e^{\tau\phi}u\|_{\frac{3R}{2},\frac{5R}{3}}\right)\label{u}\\
 &\quad +C\left(R\|e^{\tau\phi}\nabla u\|_{\frac{R}{4},\frac{R}{3}}+R\|e^{\tau\phi}\nabla u\|_{\frac{3R}{2},\frac{5R}{3}}\right)\\
 &\quad +\ C\tau\|re^{\tau \phi} u\|_{\frac{R}{4},\frac{R}{3}}+C\tau\|re^{\tau \phi} u\|_{\frac{3R}{2},\frac{5R}{3}}\label{V}
\end{align}
Now since $r$ is  small, we bound \eqref{V} and the right hand side of \eqref{u}   from above by $\tau\|e^{\tau \phi} u\|_{\frac{R}{4},\frac{R}{3}}+\tau\|e^{\tau \phi} u\|_{\frac{3R}{2},\frac{5R}{3}}$. 
Then, dividing both sides of the previous inequality by $\tau$ and noticing that $\tau^{-\frac{1}{2}}\leq 1$, one has :% we can absorb the terms of \eqref{V} in \eqref{u} :%\eqref{debutm}.
% \non \ Assume that $\tau \geq 1$, and use properties of $\psi$ to get  : 
\begin{multline}
\|e^{\tau\phi}u\|_{\frac{R}{3},\frac{3R}{2}} \leq  C\tau^\frac{1}{2}\left(\|e^{\tau\phi}u\|_{\frac{R}{4},\frac{R}{3}}+\|e^{\tau\phi}u\|_{\frac{3R}{2},\frac{5R}{3}}\right) \\
+C\left(R\|e^{\tau\phi}\nabla u\|_{\frac{R}{4},\frac{R}{3}}+R\|e^{\tau\phi}\nabla u\|_{\frac{3R}{2},\frac{5R}{3}}\right).\end{multline}

\non Recall that  $\phi(x)=-\ln r(x)+r(x)^\varepsilon$. In particular $\phi$ is radial and decreasing (for small $r$). Then one has, 
\begin{multline}\label{3c7}
\|e^{\tau\phi}u\|_{\frac{R}{3},\frac{3R}{2}}\leq
C\tau^\frac{1}{2}\left(e^{\tau\phi(\frac{R}{4})}\|u\|_{\frac{R}{4},\frac{R}{3}}+e^{\tau\phi(\frac{3R}{2})}\|u\|_{\frac{3R}{2},\frac{5R}{3}}\right)
\\+C\left(Re^{\tau\phi(\frac{R}{4})}\|\nabla u\|_{\frac{R}{4},\frac{R}{3}}
+Re^{\tau\phi(\frac{3R}{2})}\|\nabla u\|_{\frac{3R}{2},\frac{5R}{3}}\right).
\end{multline}
Now we recall the following elliptic estimates : since $u$ satisfies \eqref{E} then  :
 \begin{equation}\label{lm1m}\|\nabla u\|_{aR}\leq C\left(\frac{1}{(1-a)R}+\|W\|^{1/2}_{\infty}+\|V\|_\infty\right)\|u\|_{R}, \:\:\ \mathrm{for}\  \:0<a<1.   \end{equation}
Moreover since $A_{R_1,R_2}\subset B_{R_2}$ and $\tau \geq 1$, multiplying formula \eqref{lm1m} by $e^{\tau\phi(\frac{3R}{2})}$, we find%and properties of $\phi$ gives 

\[e^{\tau\phi(\frac{3R}{2})}\| \nabla u\|_{\frac{3R}{2},\frac{5R}{3}}\leq C\tau^\frac{1}{2}\left(\frac{1}{R}+\|W\|^{1/2}_{\infty}+\|V\|_\infty\right)e^{\tau\phi(\frac{3R}{2})}\|u\|_{2R}.\]
Using  (\ref{3c7}) and noting that $\|e^{\tau\phi} u\|_{\frac{R}{3},\frac{3R}{2}}\geq e^{\tau\phi(R)}\|u\|_{\frac{R}{3},{R}}$, one has :
 \begin{equation*}\label{ind}
\|u\|_{\frac{R}{3},R} \leq C\tau^{\frac{1}{2}}(1+\|W\|^{1/2}_{\infty}+\|V\|_\infty)\left( e^{\tau A_R}\|u\|_{\frac{R}{2}}+e^{-\tau B_R}\|u\|_{2R}\right),
  \end{equation*}
  with $A_R=\phi(\frac{R}{4})-\phi(R)$ and  $B_R=-(\phi(\frac{3R}{2})-\phi(R))$.
  From the properties of  $\phi$  we may assume that we have $0<A^{-1}\leq A_R\leq A$ and $0<B\leq B_R\leq B^{-1}$ where $A$ and $B$ don't depend on $R$. 
We may assume that $C\tau^\frac{1}{2}(1+\|W\|^{1/2}_{\infty}+\|V\|_\infty)\geq 2$. Then we can add $\|u\|_{\frac{R}{3}}$ to each side and bound it in the right hand side by  
$C\tau^\frac{1}{2}\boundinf e^{\tau A}\|u\|_{\frac{R}{2}}$. We get :
  
\begin{equation}\label{3c2}
\|u\|_{R} \leq  C\tau^\frac{1}{2}\boundinf\left(e^{\tau A}\|u\|_{\frac{R}{2}}+e^{-\tau B}\|u\|_{2R}\right).
  \end{equation} 
Now we want to find $\tau$ such that \[C\tau^\frac{1}{2}\boundinf e^{-\tau B}\|u\|_{2R}\leq \frac{1}{2}\|u\|_{R}\]
which is true for $\tau \geq -\frac{2}{B}\ln\left(\frac{1}{2C\boundinf}\frac{\|u\|_R}{\|u\|_{2R}}\right).$ 
Since $\tau$ must also satisfy  \[\tau \geq C_1\bound,\]
we choose
\begin{equation*}
% \label{tau2}
\tau = -\frac{2}{B}\ln\left(\frac{1}{2C\boundinf}\frac{\|u\|_R}{\|u\|_{2R}}\right)+C_1\bound.
\end{equation*}

%\noindent Up to a change of constant we may assume that $C(\|W\|^{1/2}_{\infty}+1)\leq C_1\|W\|_{\infty}^{2/3}+C_2$ then we can deduce from \eqref{3c2} that :
\noindent Since, of course, $ \|U\|_{\Cc^1}\geq \|U\|_\infty$, one has : 
\begin{equation}\label{last}
\|u\|_R^{\frac{B+2(A+1)}{B}}\leq e^{C\bound}\|u\|_{2R}^{\frac{2(A+1)}{B}}\|u\|_{\frac{R}{2}},
\end{equation}
 
\noindent  Finally, defining  $\alpha=\frac{2(A+1)}{2(A+1)+B}$, we see that \eqref{last} gives the result.% and taking exponent $\frac{B}{2(A+1)+B}$ of \eqref{last} gives the result. 
% \begin{equation*}
% \|u\|_R\leq e^{C_1\|W\|_{\infty}^{2/3}+C_2}\|u\|_{2R}^{\alpha}\|u\|^{1-\alpha}_{\frac{R}{2}}.
% \end{equation*} 

\end{proof}

\subsection{Doubling estimates}
%Rappel ?crire un truc pour $H_0^1$.
Now we intend to show that the vanishing order of solutions to $\eqref{E}$ is everywhere bounded by $C\bound$. 
This is an immediate consequence of the following :
\begin{thm}[doubling estimate]\label{dor}

There exists a positive constant  $C$,  depending only on  $(M,g)$ such that : if $u$ is a solution to   
\eqref{E} on $M$ 
then  for any $x_0$ in $M$ and any  $r>0$, one has
\begin{equation}\label{do}\|u\|_{B_{2r}(x_0)}\leq e^{C\bound}\|u\|_{B_r(x_0)}.
\end{equation}
\end{thm}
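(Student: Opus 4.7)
Write $K:=\bound$. I would combine the three balls inequality from Proposition \ref{tst} with the compactness of $M$, in the Donnelly--Fefferman manner.

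The key intermediate step is a global-to-local inequality: I aim to show there exists $R_*>0$ depending only on $(M,g)$ such that for every $x_0\in M$,
\begin{equation*}
\|u\|_{L^2(M)}\leq e^{CK}\|u\|_{B_{R_*}(x_0)}.
\end{equation*}
Cover $M$ by finitely many balls of radius $R_*/4$ and pick, by pigeonhole, a point $y_*$ with $\|u\|_{B_{R_*/4}(y_*)}\geq c\|u\|_{L^2(M)}$. Then connect $x_0$ to $y_*$ by a chain $y_0=x_0,y_1,\ldots,y_N$ with $y_{j+1}\in B_{R_*/2}(y_j)$ and $N$ bounded in terms of $(M,g)$ only. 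Applying Proposition \ref{tst} at each $y_{j+1}$, and using $B_{R_*/2}(y_{j+1})\subset B_{R_*}(y_j)$ together with the trivial bound $\|u\|_{B_{2R_*}(y_{j+1})}\leq\|u\|_{L^2(M)}$, yields the recursion
\[
b_{j+1}\leq e^{CK}b_j^{\alpha},\qquad b_j:=\frac{\|u\|_{B_{R_*}(y_j)}}{\|u\|_{L^2(M)}}.
\]
Since $b_N\geq c$, unwinding this (with $N$ bounded only in terms of $(M,g)$) gives $b_0\geq c'e^{-C'K}$, which is exactly the global-to-local inequality.

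With this bound in hand, the doubling estimate follows by a purely algebraic manipulation. Applying Proposition \ref{tst} at $x_0$ with radius $R_*$ and inserting $\|u\|_{B_{2R_*}(x_0)}\leq\|u\|_{L^2(M)}\leq e^{CK}\|u\|_{B_{R_*}(x_0)}$ lets the factor $\|u\|_{B_{R_*}(x_0)}^{1-\alpha}$ be absorbed into the left-hand side, producing doubling at the base scale $\|u\|_{B_{R_*}(x_0)}\leq e^{CK}\|u\|_{B_{R_*/2}(x_0)}$. The identical manipulation at radius $R_*/2^k$, using as input the doubling inequality already established at radius $R_*/2^{k-1}$ instead of the global-to-local bound, gives by induction $\|u\|_{B_{R_*/2^k}(x_0)}\leq e^{CK}\|u\|_{B_{R_*/2^{k+1}}(x_0)}$ for every $k\geq 0$, with uniform constants. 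A straightforward dyadic interpolation (for $r\in(R_*/2^{k+1},R_*/2^k]$, use $B_{2r}\subset B_{R_*/2^{k-1}}$ and $B_{R_*/2^{k+1}}\subset B_r$ together with two consecutive doublings) extends \eqref{do} to every $r\leq R_*/2$; for $r>R_*/2$ the bound is immediate from the global-to-local step.

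The main obstacle is tracking the $K$-dependence through the chain in the global-to-local step: each link in principle raises the exponent of $K$ by a factor $1/\alpha$, so the iterated constant is a priori of order $\exp(CK/\alpha^N)$. The compactness of $M$ is essential here because it forces both $N$ and $\alpha$ to be pure $(M,g)$-invariants, keeping $K$ linear in the final exponent. Once the global-to-local inequality has been secured, the remaining arguments are elementary algebraic manipulations of Proposition \ref{tst}.
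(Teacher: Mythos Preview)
Your global-to-local step (the chain-of-balls argument yielding $\|u\|_{L^2(M)}\leq e^{CK}\|u\|_{B_{R_*}(x_0)}$) is fine and matches the paper's Proposition \ref{cor1}. The problem is the induction you propose afterwards.

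Write the three balls inequality as $\|u\|_{B_R}\leq e^{CK}\|u\|_{B_{R/2}}^{\alpha}\|u\|_{B_{2R}}^{1-\alpha}$. If you assume doubling at scale $R$ with constant $e^{C_0K}$ and feed it in, you get
\[
\|u\|_{B_R}\leq e^{\frac{C+(1-\alpha)C_0}{\alpha}K}\|u\|_{B_{R/2}},
\]
so the new constant is $C_1=\frac{C}{\alpha}+\frac{1-\alpha}{\alpha}C_0$. This iteration has a bounded fixed point only when $\frac{1-\alpha}{\alpha}<1$, i.e.\ $\alpha>\tfrac12$. In Proposition \ref{tst} the exponent on the \emph{inner} ball is $\frac{B}{B+2(A+1)}$ with $A\approx\phi(R/4)-\phi(R)\approx\ln 4$ and $B\approx\phi(R)-\phi(3R/2)\approx\ln\tfrac32$, so this exponent is well below $\tfrac12$. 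Your constants therefore blow up geometrically along the dyadic scales, and the claimed ``uniform constants'' in the inductive step are not uniform.

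The paper avoids this trap by \emph{not} iterating the three balls inequality. It returns to the Carleman estimate (Corollary \ref{cics}) with a cutoff supported in the annulus $A_{\delta,R}$ where $R$ is a \emph{fixed} scale and $\delta$ is arbitrarily small. After choosing $\tau$ to absorb the outer term, one is left with $\|u\|_{B_{3\delta}}\leq e^{\tau(\phi(\delta)-\phi(3\delta))}\|u\|_{B_{3\delta/2}}$. The crucial point is that $\phi(\delta)-\phi(3\delta)=\ln 3+O(\delta^{\varepsilon})$ is bounded \emph{uniformly in $\delta$}, while the chosen $\tau$ is controlled by the ratio $\|u\|_{A_{R/8,R/4}}/\|u\|_{B_{5R/3}}$, which Corollary \ref{cor2} bounds below by $e^{-CK}$ independently of $\delta$. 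This gives doubling at every small scale in one shot, with no iteration and hence no loss of constants. If you want to repair your argument along purely three-balls lines, you would first need to boost $\alpha$ past $\tfrac12$ (e.g.\ by composing several nested applications of Proposition \ref{tst}), but that is additional work you have not done.
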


\noindent  To prove Theorem \ref{dor}, we need to use the standard overlapping chains of balls argument (\cite{DF1,JL,K}) to show :  
 \begin{prop}\label{cor1}
For any $R>0$ there exists $C_R>0$ such that for any  $x_0\in M$, any $W\in \Cc^1(M)$, any $V\in \Gamma_1(TM)$, and any solutions $u$ to \eqref{E} :
\[\| u\|_{B_R(x_0)}\geq e^{-C_R(1+\|W\|^{\frac{1}{2}}_{\mathcal{C}^1}+\|V\|_{\Cc^1})}\| u\|_{L^2(M)} .\]
\end{prop}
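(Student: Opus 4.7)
By linearity of equation \eqref{E} in $u$ I may normalize so that $\|u\|_{L^2(M)} = 1$; the goal is then to establish $\|u\|_{B_R(x_0)} \geq e^{-C_R\bound}$. Compactness of $M$ provides a finite cover $M = \bigcup_{i=1}^N B_{r_0}(y_i)$ with $r_0 \leq \min(R, R_1/4)$ and $N$ both depending only on $(M,g)$ and $R$, where $R_1$ is the threshold of Proposition \ref{tst}. Pigeonhole then yields some index $i_*$ for which
\[
\|u\|_{B_{r_0}(y_{i_*})} \;\geq\; N^{-1/2}.
\]

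The next step is to propagate this lower bound to $x_0$ along an overlapping chain of balls. Joining $y_{i_*}$ to $x_0$ by a curve of length at most $\mathrm{diam}(M)$, I place points $z_0 = y_{i_*}, z_1, \dots, z_k = x_0$ with $d(z_j, z_{j+1}) \leq r_0/2$, so that $B_{r_0}(z_j) \subset B_{2r_0}(z_{j+1})$ for each $j$; here $k$ depends only on $(M,g)$ and $R$. Applying the three-balls inequality of Proposition \ref{tst} at each $z_{j+1}$ with middle radius $2r_0$, and using the normalization $\|u\|_{B_{4r_0}(z_{j+1})} \leq 1$, one gets
\[
\|u\|_{B_{r_0}(z_j)} \;\leq\; \|u\|_{B_{2r_0}(z_{j+1})} \;\leq\; e^{C\bound}\, \|u\|_{B_{r_0}(z_{j+1})}^{\alpha}.
\]
Writing $L_j := \|u\|_{B_{r_0}(z_j)}$, this becomes the recursion $L_{j+1} \geq e^{-C\bound/\alpha} L_j^{1/\alpha}$; iterating $k$ times and taking logarithms yields
\[
-\log L_k \;\leq\; \alpha^{-k}(-\log L_0) + \frac{C\bound}{\alpha}\sum_{j=0}^{k-1} \alpha^{-j}.
\]

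Since $L_0 \geq N^{-1/2}$ and both $k$ and $\alpha^{-k}$ depend only on $R$ and $(M,g)$, the right-hand side above is bounded by $C_R\bound$, and this gives $\|u\|_{B_R(x_0)} \geq \|u\|_{B_{r_0}(x_0)} = L_k \geq e^{-C_R\bound}$, which is the claimed estimate after un-normalizing. The only delicate point is the exponential amplification by $\alpha^{-k}$ incurred at each application of the three-balls inequality; this is harmless here precisely because the number $k$ of steps is determined by $\mathrm{diam}(M)$ and $r_0$, hence by $(M,g)$ and $R$ only and independently of $V$ and $W$, so that the amplification is absorbed into the constant $C_R$.
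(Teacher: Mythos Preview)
Your argument is correct and follows essentially the same scheme as the paper's proof: normalize so that $\|u\|_{L^2(M)}=1$, use compactness to find a starting ball carrying a definite amount of mass, and then propagate this lower bound to $x_0$ along an overlapping chain of balls via repeated use of the three-balls inequality (Proposition~\ref{tst}), the number of steps being controlled by $\mathrm{diam}(M)$ and $R$ only. The only cosmetic differences are that the paper selects the starting ball as a maximizer $\bar{x}$ of $x\mapsto\|u\|_{B_R(x)}$ rather than by pigeonhole on a fixed cover, and that its chain runs from $x_0$ toward $\bar{x}$ (halving the radius at each step) whereas yours runs from $y_{i_*}$ toward $x_0$ (doubling); the iteration and the resulting bound are the same.
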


\begin{proof}
We may assume without loss of generality that $R<R_1$, with $R_1$ as in the three balls inequality (Proposition \ref{ttc}).
 Up to multiplication by a constant, we can assume that $\| u\|_{L^2(M)}=1$.
 We denote by $\bar{x}$ a point in $M$ such that  $\| u\|_{B_R(\bar{x})}=\sup_{x\in M}  \|u\|_{B_R(x)}$.
 This implies that one has  $\| u\|_{B_{R(\bar{x})}}\geq D_R$, where $D_R$ depends only on $M$ and $R$. 
One has (from Proposition \ref{ttc})  at an arbitrary point $x$ of $M$ : 

\begin{equation}\label{cop}\| u\|_{B_{R/2}(x)}\geq e^{-c\bound}\| u\|^{\frac{1}{\alpha}}_{B_R(x)}.\end{equation} 

Let $\gamma$ be a geodesic curve  between $x_0$ and $\bar{x}$ and define  $x_1,\cdots,x_m=\bar{x}$ such that 
 $x_i\in\gamma$ and
 $B_{\frac{R}{2}}(x_{i+1})\subset B_R(x_i),$ for any  $i$ from $0$ to $m-1$. The number $m$  depends only on $\mathrm{diam}(M)$ and  $R$.
 Then the properties of $(x_i)_{1\leq i\leq m}$ and inequality \eqref{cop} give for all $i$, $1\leq i\leq m$ :
\begin{equation}
\|u\|_{B_{R/2}(x_i)}\geq e^{-c\bound}\|u\|^{\frac{1}{\alpha}}_{B_{R/2}(x_{i+1})}.
\end{equation}

The result follows by iteration and the fact that $\| u\|_{B_R(\bar{x})}\geq D_R$.

\end{proof}
\begin{cor}\label{cor2}
For all $R>0$, there exists a positive constant $C_R$ depending only on $M$ and $R$ such that at any point $x_0$ in $M$ one has
\begin{equation*}
\| u\|_{R,2R}\geq e^{-C_R\bound}\| u\|_{L^2(M)}.
\end{equation*}
\end{cor}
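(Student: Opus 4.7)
The plan is to deduce Corollary \ref{cor2} from Proposition \ref{cor1} by a purely geometric observation: the annulus $A_{R,2R}(x_0)$ contains a ball of radius $R/2$ centred on its midsphere, and one may simply apply the global lower bound already proved in Proposition \ref{cor1} to that ball.

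First, I would fix $R>0$ small enough with respect to $\mathrm{diam}(M)$ (the case $R \geq \mathrm{diam}(M)/2$ only shrinks $B_R(x_0)$ relative to $B_{2R}(x_0)=M$ and reduces to a smaller radius at the cost of an enlarged constant). Since $M$ is compact and connected, the continuous function $y \mapsto d(x_0,y)$ takes every value in $[0,\mathrm{diam}(M)]$, so I can pick a point $y_0 \in M$ with $d(x_0,y_0)=3R/2$.

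Next, I would verify by the triangle inequality that $B_{R/2}(y_0) \subset A_{R,2R}(x_0)$: indeed, for any $z \in B_{R/2}(y_0)$, one has
\[ R = \tfrac{3R}{2}-\tfrac{R}{2} \leq d(x_0,z) \leq \tfrac{3R}{2}+\tfrac{R}{2} = 2R. \]
Applying Proposition \ref{cor1} at the centre $y_0$ with radius $R/2$ then gives
\[ \|u\|_{B_{R/2}(y_0)} \geq e^{-C_{R/2}\bound}\|u\|_{L^2(M)}, \]
and the containment above yields $\|u\|_{R,2R} \geq \|u\|_{B_{R/2}(y_0)}$, from which the claim follows (setting $C_R := C_{R/2}$).

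Honestly, there is no real obstacle: the hard analytic work (the Carleman estimate, the three-balls inequality and the overlapping chain-of-balls argument that proves Proposition \ref{cor1}) is already in place, so this corollary is essentially a geometric repackaging. The only point requiring minor care is the existence of the auxiliary centre $y_0$ for all admissible $R$, which is guaranteed by connectedness.
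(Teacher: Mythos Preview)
Your proposal is correct and follows essentially the same route as the paper: the paper also picks a point $x_1$ in the annulus $A_{R,2R}(x_0)$, observes that a small ball around it (of radius $R/4$ rather than your $R/2$) is contained in the annulus, and applies Proposition~\ref{cor1} to that ball. Your version is in fact slightly more explicit about the existence and location of the auxiliary centre.
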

\begin{proof} 
 Recall that $\|u\|_{R,2R}=\|u\|_{L^2(A_{R,2R})}$ with $A_{R,2R}:=\{x; R\leq d(x,x_0)\leq 2R)\}$.
Let  $R<R_1$ where $R_1$ is from Proposition \ref{tst}, note that $R_1\leq \mathrm{diam}(M)$. 
Since $M$ is geodesically complete, there exists a point $x_1$ in  $A_{R,2R}$  
 such that $B_{x_1}(\frac{R}{4})\subset A_{R,2R}$. From Proposition \ref{cor1}, one has 
 \[\|u\|_{B_{\frac{R}{4}}(x_1)}\geq e^{-C_R\bound}\| u\|_{L^2(M)}\]
 which gives the result. 
\end{proof}
\begin{proof}[Proof of Theorem \ref{dor}]

We proceed as in the proof of three balls inequality (Proposition \ref{tst}) except for the fact that now we want the first ball to become arbitrarily small in front of the others.
 Let  $R=\frac{R_1}{4}$ with $R_1$ as in the three balls inequality, let $\delta$ such that  $0<3\delta<\frac{R}{8}$,
and define a smooth function $\psi$, with $0\leq\psi\leq1$  as follows: \vspace{0,3cm}
 
\begin{itemize}
\item[$\bullet$] $\psi(x)=0$ if $r(x)<\delta$ or if $r(x)>R$,
\item[$\bullet$] $\psi(x)=1$ if  $r(x)\in[\frac{5\delta}{4},\frac{R}{2}]$,
\item[$\bullet$] $|\nabla\psi(x)|\leq\frac{C}{\delta}$ and  $|\nabla^2\psi(x)|\leq\frac{C}{\delta^2}$ if $r(x)\in[\delta,\frac{5\delta}{4}]$ ,
 \item[$\bullet$] $|\nabla\psi(x)|\leq C$ and $|\nabla^2\psi(x)|\leq C$ if $r(x)\in[\frac{R}{2},R]$.\vspace{0,3cm}
\end{itemize}
Keeping appropriate terms in \eqref{Siv2} applied to $\psi u$ gives :  
\begin{multline}
%\begin{array}{rcl}
\tau^\frac{3}{2}\|r^{\frac{\varepsilon}{2}}e^{\tau\phi}\psi u\|+ \tau^{\frac{1}{2}}\delta^{\frac{1}{2}}\|r^{-\frac{1}{2}}e^{\tau\phi}\psi u\|\\
%&
\leq
% &
C\left(\|r^2e^{\tau\phi}\nabla u\cdot\nabla\psi\|+\|r^2e^{\tau\phi}\Delta\psi u\|+\|r^2e^{\tau\phi}Vu\nabla \psi\|\right). %\vspace{0,15cm}\\
%\end{array}
\end{multline}
Using properties of $\psi$ and since $\tau\geq\|V\|_\infty$, one finds

\begin{equation}
\label{premiermars2}
\begin{split}
\tau^\frac{3}{2}\|r^{\frac{\varepsilon}{2}}e^{\tau\phi} u\|_{\frac{R}{8},\frac{R}{4}}+\tau^\frac{1}{2}\|e^{\tau\phi}u\|_{\frac{5\delta}{4},3\delta} & \leq  C (\delta \|e^{\tau\phi}\nabla u\|_{\delta,\frac{5\delta}{4}}+\|e^{\tau\phi}\nabla u\|_{\frac{R}{2},R})\nonumber \\
&\quad+C(\|e^{\tau\phi} u\|_{\delta,\frac{5\delta}{4}}+\|e^{\tau\phi}u\|_{\frac{R}{2},R})\nonumber \\
&\quad+  C \dfrac{\tau}{\delta}\|r^2e^{\tau\phi}u\|_{\delta,\frac{5\delta}{4}}+C\tau \| r^2e^{\tau\phi}u\|_{\frac{R}{2},R}.\nonumber \\
\end{split}
\end{equation}
Now, we bound from above the two last terms of the previous inequality by $C\tau\left(\|e^{\tau\phi} u\|_{\delta,\frac{5\delta}{4}}+\|e^{\tau\phi}u\|_{\frac{R}{2},R}\right)$. Then we divide both sides of \eqref{premiermars2} by $\tau^{\frac{1}{2}}$. Noticing that $\tau \geq 1$, this yields to
\begin{equation}\begin{split}
%\label{premiermars2}
\|r^{\frac{\varepsilon}{2}}e^{\tau\phi} u\|_{\frac{R}{8},\frac{R}{4}}+\|e^{\tau\phi}u\|_{\frac{5\delta}{4},3\delta} &\leq 
 C \tau^{\frac{1}{2}}\left(\delta \|e^{\tau\phi}\nabla u\|_{\delta,\frac{5\delta}{4}}+\|e^{\tau\phi}\nabla u\|_{\frac{R}{2},R}\right)\\
& + C\tau^{\frac{1}{2}}\left(\|e^{\tau\phi} u\|_{\delta,\frac{5\delta}{4}}+\|e^{\tau\phi}u\|_{\frac{R}{2},R}\right).
\end{split}\end{equation} 
%where we also use that $\tau \geq 1$

 \noindent From the elliptic estimate (\ref{lm1m}) and the decreasing of $\phi$, we get %and since $\tau\geq 1$ , we get
\begin{equation*}\begin{split}
e^{\tau\phi(\frac{R}{4})} \|u\|_{\frac{R}{8},\frac{R}{4}}&+ e^{\tau\phi(3\delta)}\|u\|_{\frac{5\delta}{4},3\delta} \\ 
&\leq  C\tau^\frac{1}{2}\boundinf\left(e^{\tau\phi(\delta)}\|u\|_{\frac{3\delta}{2}}+e^{\tau\phi(\frac{R}{3})}\|u\|_{\frac{5R}{3}}\right).
\end{split}\end{equation*}
\noindent Adding $e^{\tau\phi(3\delta)}\|u\|_{\frac{5\delta}{4}}$ to each side and noting that we can bound it from above 
by $C\tau^\frac{1}{2}\boundinf e^{\tau\phi(\delta)}\|u\|_{\frac{3\delta}{2}}$, we find that % peut pour $\tau$ suffisamment grand l'absorber par le premier terme dans le membre de droite :
\begin{equation*}\begin{split}
e^{\tau\phi(\frac{R}{4})} \|u\|_{\frac{R}{8},\frac{R}{4}}&+e^{\tau\phi(3\delta)}\|u\|_{3\delta}\\ &\leq 
C\tau^\frac{1}{2}\boundinf\left(e^{\tau\phi(\delta)}\|u\|_{\frac{3\delta}{2}}+e^{\tau\phi(\frac{R}{3})}\|u\|_{\frac{5R}{3}}\right).
\end{split}\end{equation*}
Now we want to choose $\tau$ such that  
\[C\tau^\frac{1}{2}\boundinf e^{\tau\phi(\frac{R}{3})}\|u\|_{\frac{5R}{3}}\leq \frac{1}{2}e^{\tau\phi(\frac{R}{4})} \|u\|_{\frac{R}{8},\frac{R}{4}}.\]
For the same reasons as before we choose 
  \begin{multline*}\tau=\frac{2}{\phi(\frac{R}{3})-\phi(\frac{R}{4})}\mathrm{ln}\left(\frac{1}{2C\boundinf}\frac{\|u\|_{\frac{R}{8},\frac{R}{4}}}{\|u\|_{\frac{5R}{3}}}\right)\\ +C\bound.
\end{multline*}
 Define $D_R=-\left(\phi(\frac{R}{3})-\phi(\frac{R}{4})\right)^{-1}$; like before, one has $0<E^{-1}\leq D_R\leq E$, with $E$ a fixed real number. 
 Dropping the first term in the left hand side and noting that $0<\phi (\delta )-\phi (3\delta)\leq C$, one has
  \[\|u\|_{3\delta}\leq e^{C\bound}\left(\frac{\|u\|_{\frac{R}{8},\frac{R}{4}}}{\|u\|_{\frac{5R}{3}}}\right)^{-E}\|u\|_{\frac{3\delta}{2}} \]
  Finally, from Corollary \ref{cor2}, we define  $r=\frac{3\delta}{2}$ to have : 
  \[\|u\|_{2r}\leq e^{C\bound}\|u\|_{r}. \]
   Thus, the theorem is proved for all $r\leq\frac{R_1}{16}$.
 Using Proposition \ref{cor1} we have for $r\geq \frac{R_1}{16}$ :
\begin{equation*}\begin{split}
\|u\|_{B_{x_0}(r)}\geq\| u\|_{B_{x_0}(\frac{R_0}{16})}&\geq e^{-C_0\bound}\| u\|_{L^2(M)}\\
&\geq e^{-C_1\bound} \|u\|_{B_{x_0}(2r)}.
  \end{split}\end{equation*}
\end{proof}
Finally Theorem \ref{van} is an easy and  direct consequence of this doubling estimate. 
 \section{Sharpness}
In this short section we intend to show that the estimate we obtain in Theorem \ref{van} is sharp. That is to say, in the uniform upper bound on the vanishing order
\[
 C\bound,
\]
one cannot replace the exponents 1 on $\|V\|_{\Cc^1}$ and $1/2$ on $\|W\|_{\Cc^1}$ by lower ones.\\ 
Indeed, consider the function $f_k(x_1,x_2,\cdots,x_{n+1})=\Re e(x_1+ix_2)^k$ defined in $\R^{n+1}$. We set
 $h_k$ to be the restriction of $f_k$ to $\mathbb{S}^n$, so $(h_k)_k$ is a sequence of spherical harmonics and
 $-\Delta_{\Sn} h_k=k(k+n-1)h_k$. For a smooth, non-constant, function $f$ on $M$, we define 
\[\phi_k=e^{kf}h_k.\] First, notice that $\phi_k$ vanishes at order $k$ at the north pole $(0,0,\cdots,1)$. 
Now it is easy to check that
\[
 \Delta_{\Sn}\phi_k=\langle V_k , \phi_k\rangle +W_k\phi_k,
\]
with
 \begin{equation*}
 \begin{split}
  V_k & = 2k\nabla f,\\
W_k&=k\Delta f-k^2|\nabla f|^2-k(k+n-1).
 \end{split}
\end{equation*}
Then one has  $C^{-1}k\leq\left\|V_k\right\|_{\Cc^1}\leq Ck,$ and $\ C^{-1}k^2\leq\left\|W_k\right\|_{\Cc^1}\leq Ck^2$, for an appropriate constant $C$ depending only on $(f,n)$. 
Therefore the sharpness is established. 
\section{Appendix.}
The aim of this appendix is to prove the claim (\ref{I_3}) we used in the proof of Theorem \ref{tics}. More precisely, we show the following lemma.
\begin{lm}
We have  
 \begin{equation}
 \label{I_3bis }
 \begin{split}
 I_3 &\geq  3\tau \int\left|f^{\prime\prime}\right|\left|D_\theta u\right|^2f^{\prime^{-3}}\sqrt{\gamma}dtd\theta 
-c\tau^3\int e^t|u|^2f^{\prime^{-3}}\sqrt{\gamma}dtd\theta\\
 &\quad-c\tau\int\left|f^{\prime\prime}\right|\left|\partial_t u\right|^2f^{\prime^{-3}}\sqrt{\gamma}dtd\theta
-c\tau^2\int\left|f^{\prime\prime}\right| |u|^2f^{\prime^{-3}}\sqrt{\gamma}dtd\theta. 
 \end{split}\end{equation}
% where $c$ denotes an arbitrary small constant.
 \end{lm}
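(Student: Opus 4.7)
The plan is to expand the inner product defining $I_3$ into its natural cross products and then integrate by parts, first in the angular variables $\theta^i$ and then in $t$, converting every mixed derivative of $u$ into a bulk quantity ($|u|^2$, $|\partial_t u|^2$, or $|D_\theta u|^2$) weighted by powers of $f'$, $f''$, $e^t$ and $\sqrt{\gamma}$. Writing the first slot of the inner product as $A+B$ with $A=\partial_t^2 u+\Delta_\theta u$ and $B=(\tau^2 f'^2+\tau f'e^{2t}V_t+(n-2)\tau f'+e^{2t}W)\,u$, and the second slot as $C+D$ with $C=(2\tau f'+e^{2t}V_t)\partial_t u$ and $D=e^{2t}V_i\partial_i u$, one is led to examine the four pairings $\langle A,C\rangle_f$, $\langle A,D\rangle_f$, $\langle B,C\rangle_f$, $\langle B,D\rangle_f$.

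The positive contribution $c\tau\int|f''|\,|D_\theta u|^2 f'^{-3}\sqrt{\gamma}\,dtd\theta$ is extracted from the subterm $4\tau\int f'\,\Delta_\theta u\,\partial_t u\,f'^{-3}\sqrt{\gamma}\,dtd\theta$ of $2\langle A,C\rangle_f$. A first integration by parts in $\theta$ turns it into $-4\tau\int \gamma^{ij}\partial_j u\,\partial_i\partial_t u\,f'^{-2}\sqrt{\gamma}\,dtd\theta$, which equals $-2\tau\int\partial_t|D_\theta u|^2 f'^{-2}\sqrt{\gamma}\,dtd\theta$ up to an error of order $\tau\int e^t|D_\theta u|^2 f'^{-3}\sqrt{\gamma}\,dtd\theta$ coming from $\partial_t\gamma^{ij}$ by \eqref{m2}. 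A second integration by parts in $t$ produces $2\tau\int|D_\theta u|^2\partial_t(f'^{-2}\sqrt{\gamma})\,dtd\theta$, whose dominant part is exactly a positive multiple of $\tau\int|f''|\,|D_\theta u|^2 f'^{-3}\sqrt{\gamma}\,dtd\theta$, since $\partial_t(f'^{-2})=-2f'^{-3}f''=2f'^{-3}|f''|$ (recall $f''<0$); the remaining $\partial_t\sqrt{\gamma}$-remainder is again $O(e^t)$ by \eqref{m2} and is absorbed into the first error integral in \eqref{I_3}.

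The remaining pieces of $2\langle A,C\rangle_f$ and $2\langle B,C\rangle_f$ generate only the error terms appearing on the right-hand side of \eqref{I_3}. The subterm $4\tau\int\partial_t^2 u\,\partial_t u\,f'^{-2}\sqrt{\gamma}\,dtd\theta$ integrates by parts in $t$ to $-c\tau\int|f''|\,|\partial_t u|^2 f'^{-3}\sqrt{\gamma}\,dtd\theta$ modulo $e^t$-small remainders, while the leading piece $4\tau^3\int f'^2\,u\,\partial_t u\,\sqrt{\gamma}\,dtd\theta$ of $2\langle B,C\rangle_f$ becomes $-2\tau^3\int u^2\,\partial_t(f'^2\sqrt{\gamma})\,dtd\theta$ after IBP in $t$, hence is controlled by $c\tau^3\int e^t|u|^2 f'^{-3}\sqrt{\gamma}\,dtd\theta$ plus a $\tau^2\int|f''||u|^2 f'^{-3}\sqrt{\gamma}\,dtd\theta$ contribution. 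The cross terms involving $V_t$ or $W$ inside $B$ and $C$ are of strictly lower order and, after Young's inequality combined with the assumption $\tau\ge C_1(1+\|V\|_{\Cc^1}+\|W\|_{\Cc^1}^{1/2})$, fit into $\tau^2\int|f''||u|^2 f'^{-3}\sqrt{\gamma}\,dtd\theta$ and $\tau\int|f''||\partial_t u|^2 f'^{-3}\sqrt{\gamma}\,dtd\theta$.

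Finally, the spherical drift pairings $2\langle A,D\rangle_f$ and $2\langle B,D\rangle_f$ are handled by integrating by parts in $\theta^i$, transferring the derivative off $u$ and producing factors $e^{2t}\partial_i V_i$ bounded by $e^t\|V\|_{\Cc^1}$; every resulting term carries at least one extra $e^t$ or $e^{2t}$ weight, so Young's inequality (applied with $\tau\ge C_1\|V\|_{\Cc^1}$) places them comfortably inside the four error integrals of \eqref{I_3}. The main obstacle is the bookkeeping: one must track each factor of $f'$, $f''$, $\partial_t\sqrt{\gamma}$, and $\partial_t\gamma^{ij}$ and check at every step that the metric-derivative remainders are at most $O(e^t)$ compared with $|f''|$. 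That this absorption is indeed possible is exactly what the conditions \eqref{f} and \eqref{m2} are designed for, provided $|T_0|$ is large enough so that $e^t\ll|f''|$ and $f'\approx 1$ on the support of $u$.
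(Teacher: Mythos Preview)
Your proposal is correct and follows essentially the same route as the paper: the paper expands $I_3$ into sixteen explicit cross terms $J_1,\dots,J_{16}$ (which are exactly the pieces of your pairings $\langle A,C\rangle_f$, $\langle B,C\rangle_f$, $\langle A,D\rangle_f$, $\langle B,D\rangle_f$), extracts the positive $4\tau\int|f''|\,|D_\theta u|^2 f'^{-3}\sqrt{\gamma}\,dtd\theta$ from the $\Delta_\theta u\times 2\tau f'\partial_t u$ piece via the same $\theta$-then-$t$ integration by parts, and bounds every remaining $J_i$ by the four error integrals using \eqref{f}, \eqref{m2} and the size assumption on $\tau$. One minor slip: in the leading $\langle B,C\rangle_f$ term the powers of $f'$ in $B$ and $C$ cancel against the $f'^{-3}$ weight, so after IBP only $\partial_t\sqrt{\gamma}=O(e^t)$ survives and there is no $\tau^3|f''|$ contribution (your stated bound is still valid, just for a simpler reason).
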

 \begin{proof}
 We begin by recalling the definition of $I_3$ :
\begin{multline*}
I_3=2 \left\langle \partial^2_tu+(\tau^2f^{\prime^2}+\tau f^{\prime}e^{2t}V_t+(n-2)\tau f^\prime +e^{2t}W)u+\Delta_\theta u \right.\\
 \left.,(2\tau f^\prime+e^{2t}V_t)\partial_tu+e^{2t}V_i\partial_iu \right\rangle_f .
\end{multline*}
We also recall the following estimates on the weight and the metric :
\begin{equation}\label{fbis}
\begin{split}
 &1-\varepsilon e^{\varepsilon T_0}\leq f^\prime(t) \leq 1 \qquad\forall t\in]-\infty,T_0[,\\
&\displaystyle{\lim _{t\rightarrow-\infty}-e^{-t}f^{\prime\prime}(t)}=+\infty, 
\end{split}
\end{equation}
and, $\forall \ i,j,k\in \left\{1,\ldots n-1\right\}$,
\begin{equation}\label{m2bis}
\begin{split}
\partial_t(\gamma^{ij})&\leq Ce^t (\gamma^{ij}) \ \ \mbox{(in the sense of tensors)};  \\
\partial_k(\gamma^{ij})&\leq C (\gamma^{ij})\ \ \ \ \ \mbox{(in the sense of tensors)};  \\
|\partial_t(\gamma)|&\leq Ce^t;\\
C^{-1}\leq\gamma &\leq  C. 
\end{split}
\end{equation}
\noindent
We will also use the key assumption on $\tau$ : \begin{equation}\label{tau}\tau\geq C_1(1+\sqrt{\|W\|_{\mathcal{C}^1}}+\|V\|_{\Cc^1}).\end{equation}

\non In order to compute $I_3$ we write it in a convenient way: 
\begin{equation}\label{I3}
 I_3=\sum_{i=1}^{16}J_i,
\end{equation}
where the integrals $J_i$ are defined by :
\begin{equation*}%\label{I3}
\begin{split}
J_1&=2\tau \int f^\prime \partial_t(|\partial_tu|^2)f^{\prime^{-3}}\sqrt{\gamma}dtd\theta\\
J_2&=4\tau\int f^\prime\partial_tu\partial_i\left(\sqrt{\gamma}\gamma^{ij}\partial_ju\right)f^{\prime^{-3}}dtd\theta\\
J_3&=\int \left(2\tau^3+2(n-2)\tau^2f^{{\prime}^{-1}}+2\tau f^{\prime^{-2}} e^{2t}W\right)\partial_t|u|^2\sqrt{\gamma}dtd\theta\\
J_4&=2 \tau^2\int e^{2t}V_t\partial_t|u|^2f^{\prime^{-1}}\sqrt{\gamma}dtd\theta\\
J_5&=\int e^{2t}V_t\partial_t(|\partial_t u|^2)f^{\prime^{-3}}\sqrt{\gamma}dtd\theta\\
J_6&=\tau^2\int e^{2t}V_t\partial_t|u|^2f^{\prime^{-1}}\sqrt{\gamma}dtd\theta\\
J_7&=\tau\int  e^{4t}V^2_t       \partial_t|u|^2f^{\prime^{-2}}\sqrt{\gamma}dtd\theta\\
\end{split}
\end{equation*}
\begin{equation*}%\label{I3}
\begin{split}
J_8&=(n-2)\tau\int     e^{2t}V_t     \partial_t|u|^2f^{\prime^{-2}}\sqrt{\gamma}dtd\theta\\
J_9&=\int        e^{4t}WV_t \partial_t|u|^2f^{\prime^{-3}}\sqrt{\gamma}dtd\theta\\
J_{10}&=2\int e^{2t}V_t\partial_i( \sqrt{\gamma}\gamma^{ij}\partial_ju)\partial_tuf^{\prime^{-3}} dtd\theta\\
J_{11}&=2\int e^{2t}V_i\partial_iu\partial^2_tuf^{\prime^{-3}}\sqrt{\gamma}dtd\theta\\
J_{12}&=\tau^{2}\int e^{2t} V_i \partial_i |u|^2f^{\prime^{-1}}\sqrt{\gamma} dtd\theta\\
J_{13}&=\tau\int  e^{4t} V_iV_t \partial_i |u|^2     f'^{-2} \sqrt{\gamma}dtd\theta\\
J_{14}&=(n-2)\tau\int  e^{2t} V_i \partial_i|u|^2      f'^{-2} \sqrt{\gamma} dtd\theta\\
J_{15}&=\int e^{4t}V_iW\partial_i|u|^2f^{\prime^{-3}}\sqrt{\gamma}dtd\theta\\
J_{16}&=  2\int  e^{2t} V_k \partial_k u \partial_i\left(\sqrt{\gamma}\gamma^{ij}\partial_ju\right)  f'^{-3}  dtd\theta  .
\end{split}
\end{equation*}
Here we noticed that $2\partial_tu\partial_t^2u=\partial_t(|\partial_tu|^2)$ and $2 u\partial_t u = \partial_t |u|^2$.
Before we start the computation, we want to point out that the only positive term of \eqref{I3} comes from $J_2$. Now we will use integration by parts to estimate each $J_i$. 
Note that $f$ is radial.\newline
 
%\newline%and that $2\partial_tu\partial_t^2u=\partial_t(|\partial_tu|^2)$. 
\noindent We begin with $J_1$. We find that  :
\begin{equation*}%\begin{array}{rcl}
J_1
%&
=
%& 
\int\left(4\tau f^{\prime\prime}\right)|\partial_tu|^2f^{\prime^{-3}}\sqrt{\gamma}dtd\theta\\
%&
%
-
%&
\int2\tau f^\prime\partial_t\mathrm{ln}\sqrt{\gamma}|\partial_{t}u|^2f^{\prime^{-3}}\sqrt{\gamma}dtd\theta.%\end{array}
\end{equation*}
The conditions \eqref{m2bis} imply that $|\partial_t\ln\sqrt{\gamma}|\leq Ce^t$. Then properties \eqref{fbis} on $f$ give for large $|T_0|$ 
that $|\partial_t\ln\sqrt{\gamma}|$ is small compared to $|f^{\prime\prime}|$. Then one has   
\begin{equation}\label{J_1}J_1\geq -c\tau\int |f^{\prime\prime}|\cdot|\partial_tu|^2f^{\prime^{-3}}\sqrt{\gamma}dtd\theta.\end{equation}

\noindent 
In order to estimate $J_2$ we first integrate by parts with respect to $\partial_i$ : 
\begin{equation*}\begin{array}{rcl}
J_2
   &=&-2\int2\tau f^{\prime}\partial_t\partial_iu\gamma^{ij}\partial_juf^{\prime^{-3}}\sqrt{\gamma}dtd\theta.                  
     \end{array}
\end{equation*}            
Then we integrate by parts with respect to $\partial_t$. We get : 
\begin{multline*}
J_2=-4\tau\int f^{\prime\prime}\gamma^{ij}\partial_iu\partial_juf^{\prime^{-3}}\sqrt{\gamma}dtd\theta\\
\quad+\int2\tau f^{\prime}\partial_t\mathrm{ln}\sqrt{\gamma}\gamma^{ij}\partial_iu\partial_juf^{\prime^{-3}}\sqrt{\gamma}dtd\theta\\
+\int2\tau f^\prime\partial_t(\gamma^{ij})\partial_iu\partial_juf^{\prime^{-3}}\sqrt{\gamma}dtd\theta.
%\end{split}
\end{multline*}
Recall that $|D_\theta u|^2$ denotes  $|D_\theta u|^2=\partial_iu\gamma^{ij}\partial_ju$. Now using that $-f^{\prime\prime}$ is non-negative and $\tau$ is large, 
the conditions \eqref{fbis}  and \eqref{m2bis} give for $|T_0|$ large enough:
\begin{equation}\label{J_2}
 J_2\geq \dfrac{7}{2}\tau\int|f^{\prime\prime}|\cdot|D_\theta u|^2f^{\prime^{-3}}\sqrt{\gamma}dtd\theta.
\end{equation}
Similarly computation of $J_3$ gives :
\begin{multline*}\label{J_31}%\begin{split}
J_3=-2\int(\tau^3+(n-2)\tau^2f^{\prime^{-1}})\partial_t\mathrm{ln}(\sqrt{\gamma})u^2\sqrt{\gamma}dtd\theta\\
-\int(4f^\prime-4 f^{\prime\prime}+2f^{\prime}\partial_t\ln\sqrt{\gamma} )\tau e^{2t}Wu^2f^{\prime^{-3}}\sqrt{\gamma}dtd\theta\\
+2\int(n-2)\tau^2f^{\prime\prime}f^\prime|u|^2f^{\prime^{-3}}\sqrt{\gamma}dtd\theta\\
-\int2\tau f^\prime e^{2t}\partial_tW|u|^2f^{\prime^{-3}}\sqrt{\gamma}dtd\theta.
%\end{split}
\end{multline*}

 From  \eqref{fbis} and \eqref{m2bis} one can see that if $C_1$ and $|T_0|$ are large enough, then\:\newline
\begin{equation}\label{J_3}
J_3\geq-c\tau^3\int e^t|u|^2f^{\prime^{-3}}\sqrt{\gamma}dtd\theta
-c\tau^2\int |f^{\prime\prime}|u|^2f^{\prime^{-3}}\sqrt{\gamma}dtd\theta .
\end{equation}
%From now on, we will be sketchy since the arguments will be the same as before. That is to say we use integration by parts, \eqref{fbis}, \eqref{m2bis} and \eqref{tau}.\newline
We now compute the terms involving only radial derivatives, that is to say $J_i$ for $i=4,\ldots ,9$. We have
\begin{equation*}\begin{split}
J_4&=2 \tau^2\int e^{2t}V_t\partial_t|u|^2f^{\prime^{-1}}\sqrt{\gamma}dtd\theta% = 2\int \tau^2 f'^2  e^{2t}\partial_t u V_t u f'^{-3} \sqrt{\gamma} dtd\theta
\\
&= -2\tau^2 \int u^2 e^{2t}\left(2V_t f^{\prime^2}-V_t f'' f' + f'^2 \partial_t V_r+f'^2 V_t \partial_t (\ln \sqrt{\gamma})     \right)  f'^{-3} \sqrt{\gamma} dtd\theta%\\
%&\geq & -C \tau^2  \int \left\|V\right\|_{t}u^2 e^{2t} f'^{-3} \sqrt{\gamma} dtd\theta\\
\end{split}
\end{equation*}
\begin{equation}J_4  \geq-c\tau^3\int e^{t}u^2f'^{-3} \sqrt{\gamma} dtd\theta ,\end{equation}
and
\begin{equation*}\begin{split}
J_5& =  \int e^{2t}V_t \partial_t (|\partial_t u|^2)f'^{-3} \sqrt{\gamma} dtd\theta \\
&= - \int e^{2t}|\partial_t u|^2  \left(2V_t+\partial_t V_t -3V_t f'' f'^{-1}+V_t \partial_t (\ln \sqrt{\gamma})   \right)  f'^{-3} \sqrt{\gamma} dtd\theta \\
%& \geq & -C  \int \left\|V\right\|_t|\partial_t u|^2 e^{2t} f'^{-3} \sqrt{\gamma} dtd\theta \\
%& \geq  -c \tau\int e^{t}|\partial_t u|^2 f'^{-3} \sqrt{\gamma} dtd\theta \\
%&  \geq  -c \tau\int |f^{\prime \prime}| |\partial_t u|^2 f'^{-3} \sqrt{\gamma} dtd\theta
\end{split}\end{equation*}
\begin{equation}J_5 \geq  -c \tau\int |f^{\prime \prime}| |\partial_t u|^2 f'^{-3} \sqrt{\gamma} dtd\theta .\end{equation}
In the last inequality, we use that $e^t$ is small compared to $|f''|$. Let's resume our computation. We obtain
\begin{equation*}\begin{split}
J_6 &=\tau^{2} \int e^{2t}V_t \partial_t |u|^2 f'^{-1} \sqrt{\gamma} dtd\theta \\
&= -\tau^2\int e^{2t} u^2 \left(2V_t f^{\prime^{2}}+\partial_t V_t f'^2-f''f' V_t+ V_t f'^2 \partial_t (\ln \sqrt{\gamma})  \right)f'^{-3} \sqrt{\gamma} dtd\theta \\
%& \geq & -C \tau^2 \int u^2 \left \|V\right\|_te^{2t} f'^{-3} \sqrt{\gamma} dtd\theta \\
%&  \geq  -c \tau^3\int  e^t u^2f'^{-3} \sqrt{\gamma} dtd\theta ,
\end{split}\end{equation*}
\begin{equation}J_6 \geq  -c \tau^3\int  e^t u^2f'^{-3} \sqrt{\gamma} dtd\theta ,\end{equation}

\begin{equation*}\begin{split}
J_{7}&=\tau \int  e^{4t}|V_t|^2 \partial_t |u|^2   f'^{-2} \sqrt{\gamma} dtd\theta %=% \tau \int  (V_t)^2 \partial_t u  f' u  f'^{-3} \sqrt{\gamma} dtd\theta 
\\
&= -\tau \int u^2e^{4t} \left(4|V_t|^2f^{\prime}+ 2V_t\partial_t V_t  f' \right)f'^{-3} \sqrt{\gamma} dtd\theta \\
&\quad+\tau \int u^2e^{4t} \left(  2|V_t|^{2} f''-|V_t|^{2}f^\prime\partial_t (\ln \sqrt{\gamma})  \right)f'^{-3} \sqrt{\gamma} dtd\theta %\\
%&\
\end{split}\end{equation*}
\begin{equation}
 J_{7}\geq-c\tau^3 \int e^{t}|u|^2  f'^{-3} \sqrt{\gamma} dtd\theta ,
\end{equation}
and
\begin{equation*}\begin{split}
J_{8}&=(n-2)\tau\int e^{2t}V_t\partial_t|u|^2f^{\prime^{-2}}\sqrt{\gamma}dtd\theta\\
&=-(n-2)\tau\int e^{2t}\left( 2V_tf^{\prime}+\partial_tV_tf^{\prime}-2f^{\prime\prime}V_t+V_t\partial_t\ln\sqrt{\gamma}\right)|u|^2f^{\prime^{-3}}\sqrt{\gamma}dtd\theta \\
%&\geq-C\tau^2\int e^t|u|^2\sqrt{\gamma}f^{\prime^{-3}}dtd\theta \\
%& \geq  -c\tau^2\int |f''| |u|^2\sqrt{\gamma}f^{\prime^{-3}}dtd\theta,
\end{split}\end{equation*}
\begin{equation}
 J_{8}  \geq-c\tau^2\int |f''| |u|^2\sqrt{\gamma}f^{\prime^{-3}}dtd\theta,
\end{equation}
where we use once more that $e^t$ is small compared to $|f''|$. Finally, for $J_9$, we get
\begin{equation*}\begin{split}
J_9&=\int e^{4t} V_tW \partial_t |u|^2 f'^{-3} \sqrt{\gamma} dtd\theta \\ 
&= %\right.%&+& \left.
- \int u^2 e^{4t}\left(4V_t W+\partial_t V_t W+V_t \partial_t W  \right)f'^{-3} \sqrt{\gamma} dtd\theta \\
&\quad +\int u^2 e^{4t}\left(3f'' f'^{-1}V_t W - V_t W  \partial_t (\ln \sqrt{\gamma})  \right)f'^{-3} \sqrt{\gamma} dtd\theta\\
%& \geq & -C  \int u^2 e^{4t}\left\|V\right\|_t \left\|W\right\|_t f'^{-3} \sqrt{\gamma} dtd\theta \\
%&\geq-c\tau^3\int e^{t}|u|^2f'^{-3} \sqrt{\gamma} dtd\theta.
\end{split}\end{equation*}

\begin{equation}
 J_9\geq-c\tau^3\int e^{t}|u|^2f'^{-3} \sqrt{\gamma} dtd\theta.
\end{equation}

%We want to recollect and synthetise the estimates that we have obtained so far. 
%$$I_3=$$ 
\noindent Now, we deal with the terms involving spherical derivative. %We will be a bit more careful than before since we introduce new (basic) ingredients.
 We recall that
\begin{equation*}\begin{split}
J_{10}&=2\int e^{2t} V_t \partial_t u  \partial_i(\sqrt{\gamma}\gamma^{ij}\partial_j u)  f'^{-3}dtd\theta . \\
\end{split}\end{equation*}
Integrating by parts in the spherical variables gives 
\begin{equation*}\begin{split}
J_{10}& = -2\int e^{2t} V_t \partial_i \partial_t u \gamma^{ij} \partial_j u  f'^{-3} \sqrt{\gamma} dtd\theta \\
&\quad\quad\quad\quad\quad\quad- 2\int e^{2t}\partial_i V_t \partial_t u \gamma^{ij} \partial_j u   f'^{-3} \sqrt{\gamma} dtd\theta .\\
\end{split}\end{equation*}
Now, we use the identity $\partial_t |D_\theta u|^2 =2\gamma^{ij}\partial_t \partial_i u \partial_j u+\partial_t \gamma^{ij}\partial_i u \partial_j u$ to find 
%\end{split}\end{equation}
%\end{eqnarray*}
%We recall the notation $|D_\theta u|^2=\partial_iu\gamma^{ij}\partial_ju$, we can write :
%\begin{eqnarray*}
\begin{equation*}\begin{split}
%\begin{multline*}
J_{10}&
=-\int e^{2t}V_t (\partial_t |D_{\theta} u|^2-\partial_t \gamma^{ij}  \partial_i u \partial_j u )  f'^{-3} \sqrt{\gamma} dtd\theta \\
&\quad\quad\quad\quad\quad\quad\quad\quad- 2\int e^{2t}\partial_i V_t \partial_t u \gamma^{ij} \partial_j u   f'^{-3} \sqrt{\gamma} dtd\theta.% \\
%\end{multline*}
\end{split}\end{equation*}
Finally, integrating by parts with respect to the radial variable, 
\begin{equation*}\begin{split}
J_{10}&= \int   e^{2t}|D_{\theta} u|^2 (2V_t+\partial_t V_t-3V_t f'' f'^{-1}+V_t \partial_t  (\ln \sqrt{\gamma})  )     f'^{-3} \sqrt{\gamma} dtd\theta \\
&\quad \quad\quad\quad+\int e^{2t}V_t \partial_t \gamma^{ij}  \partial_i u \partial_j u   f'^{-3} \sqrt{\gamma} dtd\theta \\
&\quad\quad\quad\quad\quad\quad\quad\quad\quad-2 \int e^{2t}\partial_i V_t \partial_t u \gamma^{ij} \partial_j u   f'^{-3} \sqrt{\gamma} dtd\theta, \\
%& \geq & -C \left\|V\right\|_t \int e^{2t} (|\partial_t u|^2+|D_\theta u|^2)  f'^{-3} \sqrt{\gamma} dtd\theta \\
\end{split}\end{equation*}
we obtain
\begin{equation}%\begin{split}
J_{10} %\geq % -C\tau\int e^{t} (|\partial_t u|^2+|D_\theta u|^2)  f'^{-3} \sqrt{\gamma} dtd\theta \\
%& 
\geq  -c\tau\int |f''| |\partial_t u|^2  f'^{-3} \sqrt{\gamma} dtd\theta -c\tau\int e^{t} |D_\theta u|^2 f'^{-3} \sqrt{\gamma} dtd\theta.
%\end{split}
\end{equation}
Integrating by parts the following 
\begin{equation*}\begin{split}
J_{11}&=2\int  e^{2t} V_i \partial_i u  \partial^2_t u  f'^{-3} \sqrt{\gamma} dtd\theta,\\
\end{split}\end{equation*}
gives 
\begin{equation*}\begin{split}
J_{11} &= - 2\int e^{2t}\partial_i u\partial_t u  \left(2V_i+ \partial_t V_i-3f'' f'^{-1}V_i+V_i \partial_t (\ln \sqrt{\gamma})   \right)  f'^{-3} \sqrt{\gamma} dtd\theta\\
&\quad\quad\quad\quad\quad\quad\quad\quad  -2\int e^{2t} V_i \partial_t \partial_i u \partial_t u  f'^{-3} \sqrt{\gamma} dtd\theta.
\end{split}\end{equation*}
Noticing that $2\partial_t \partial_i u \partial_t u=\partial_i|\partial_t u|^2$, we have 

\begin{equation*}\begin{split}
J_{11}&=-2\int e^{2t}\partial_i u\partial_t u  \left(2V_i+ \partial_t V_i-3f'' f'^{-1}V_i+V_i \partial_t (\ln \sqrt{\gamma})   \right)  f'^{-3} \sqrt{\gamma} dtd\theta\\
&\quad\quad\quad\quad\quad\quad\quad\quad 
- \int e^{2t} V_i \partial_i (|\partial_t u|^2) f'^{-3} \sqrt{\gamma} dtd\theta,\end{split}\end{equation*}
then integrating by parts the last integral of the right hand side gives 
\begin{equation*}\begin{split}
J_{11}&= - 2\int e^{2t}\partial_i u\partial_t u  \left(2V_i+ \partial_t V_i-3f'' f'^{-1}V_i+V_i \partial_t (\ln \sqrt{\gamma})   \right)  f'^{-3} \sqrt{\gamma} dtd\theta \\
& \quad\quad\quad\quad\quad\quad\quad\quad + \int e^{2t} |\partial_t u|^2 \left(\partial_i V_i +V_i \partial_i (\ln \sqrt{\gamma})\right)  f'^{-3}\sqrt{\gamma} dtd\theta.
\end{split}\end{equation*}

%& \geq & -C \left\|V\right\|_t \int  e^{2t}(|\partial_t u|^2+|D_\theta u|^2) f'^{-3} \sqrt{\gamma} dtd\theta \\
%&-& C \left\|V\right\| \int |\partial_t u|^2 e^t f'^{-3} \sqrt{\gamma} dtd\theta
%& \geq  -C\tau\int e^t(|\partial_tu|^2+|D_\theta u|^2) f'^{-3} \sqrt{\gamma} dtd\theta \\
Therefore we can state that 
\begin{equation} 
J_{11} \geq  -c\tau\int |f''| |\partial_t u|^2  f'^{-3} \sqrt{\gamma} dtd\theta -c\tau\int e^{t} |D_\theta u|^2) f'^{-3} \sqrt{\gamma} dtd\theta.
\end{equation}
%\end{split}\end{equation*}
From the definition of $J_{12}$
\begin{equation*}\begin{split}
 J_{12}&=\tau^{2}\int  e^{2t} V_i \partial_i| u|^2        f'^2   f'^{-3} \sqrt{\gamma} dtd\theta,\\
\end{split}\end{equation*}
integrating by parts with respect to the spherical variables gives 
\begin{equation*}\begin{split}
  J_{12}&= -\tau^2\int u^2 e^{2t} f'^2\left(\partial_i V_i+V_i \partial_i  (\ln \sqrt{\gamma})  \right)f'^{-3} \sqrt{\gamma} dtd\theta, \\
\end{split}\end{equation*}
therefore we can derive the estimate
\begin{equation}\begin{split}
%& \geq & -C \tau^2 \left\|V\right\| \int |u|^2 e^{2t} f'^{-3} \sqrt{\gamma} dtd\theta \\
J_{12}&\geq -c\tau^3 \int |u|^2 e^{t} f'^{-3} \sqrt{\gamma} dtd\theta.
\end{split}\end{equation}
In the same way, we have

%Similarly from
\begin{equation*}\begin{split}
J_{13}&=\tau\int  e^{4t} V_iV_t  \partial_i |u|^2    f'^{-2} \sqrt{\gamma} dtd\theta \\%\end{split}\end{equation*}
%integration by parts gives
%\begin{equation*}\begin{split}
 &= -\tau\int u^2 e^{4t} f' \left( \partial_i V_i V_t+%\right. \\
%&+& \left. 
V_i \partial_i V_t  + V_i V_t  \partial_i (\ln \sqrt{\gamma})\right) f'^{-3} \sqrt{\gamma} dtd\theta \\
%& \geq & -C \tau \left\|V\right\|_t^2 \int e^{4t}|u|^2  f'^{-3} \sqrt{\gamma} dtd\theta \\
\end{split}\end{equation*}
%and therefore one has
\begin{equation}\begin{split}
J_{13}&\geq-c\tau^3\int|u|^2e^{t}f'^{-3} \sqrt{\gamma} dtd\theta ,
%J_{13} &\geq&-c\tau^3\int|u|^2e^{2t}f'^{-3} \sqrt{\gamma} dtd\theta ,
\end{split}\end{equation}
%Now since 
\begin{equation*}\begin{split}
 J_{14}&=(n-2)\tau\int  e^{2t} V_i \partial_i|u|^2      f'^{-2} \sqrt{\gamma} dtd\theta\\
    &=-(n-2)\tau\int e^{2t}(\partial_iV_i+\partial_i\ln\sqrt{\gamma})|u|^2f^{\prime^{-2}}\sqrt{\gamma}dtd\theta\\
      % &\geq&- \tau\|V\|_t\int e^{2t}|u|^2f^{\prime^{-3}}\sqrt{\gamma}dtd\theta\\
\end{split}\end{equation*}
%Then 
\begin{equation}%\begin{split}
 %&\geq-C\tau^2\int e^t|u|^2f^{\prime^{-3}}\sqrt{\gamma}dtd\theta \\
%& 
J_{14}\geq-c\tau^2\int |f''||u|^2f^{\prime^{-3}}\sqrt{\gamma}dtd\theta ,
%\end{split}
\end{equation}
and
%Integrating by parts the integral 
\begin{equation*}\begin{split}
 J_{15}&=\int e^{4t}V_iW\partial_i|u|^2f^{\prime^{-3}}\sqrt{\gamma}dtd\theta \\
%\end{split}\end{equation*} gives \begin{equation*}\begin{split}
 &=-\int e^{4t}(\partial_iV_iW +V_i\partial_iW+V_iW\partial_i\ln\sqrt{\gamma})|u|^2f^{\prime^{-3}}\sqrt{\gamma}dtd\theta
%&\geq&-\|V\|_t\|W\|_t\int e^{4t}|u|^2f^{\prime^{-3}}\sqrt{\gamma}dtd\theta\\
\end{split}\end{equation*} %and then 
\begin{equation}%\begin{split}
%&
 J_{15}\geq-c\tau^3\int e^{t}|u|^2f^{\prime^{-3}}\sqrt{\gamma}dtd\theta .
%\end{split}
\end{equation}
We now turn to $J_{16}$ 
\begin{equation*}\begin{split}
J_{16}&=2\int  e^{2t} V_k \partial_k u \partial_i(\sqrt{\gamma}\gamma^{ij}\partial_j u)  f'^{-3} dtd\theta. \end{split}\end{equation*} 
We first integrate by parts with respect to $\partial_t$
\begin{equation*}\begin{split}
J_{16} &= -2\int e^{2t} (\partial_i V_k \partial_k u+V_k \partial_i \partial_k u )\gamma^{ij} \partial_j u  f'^{-3} \sqrt{\gamma}dtd\theta ,
 \end{split}\end{equation*} and use the identity $ \partial_i \partial_k u \gamma^{ij} \partial_j u=\dfrac{1}{2}(\partial_k |D_\theta u|^2- \partial_k \gamma^{ij} \partial_i u \partial_j u )$ to find
\begin{equation*}\begin{split}
%&-&2\int e^{2t} \partial_i V_k \partial_k u \gamma^{ij} \partial_j u  f'^{-3} \sqrt{\gamma} dtd\theta \\
J_{16}&=  -2\int e^{2t} \partial_i V_k \partial_k u \gamma^{ij} \partial_j u  f'^{-3} \sqrt{\gamma} dtd\theta \\
&\quad-\int e^{2t} V_k  (\partial_k |D_\theta u|^2- \partial_k \gamma^{ij} \partial_i u \partial_j u ) f'^{-3} \sqrt{\gamma} dtd\theta .
 \end{split}\end{equation*} 
 Then an integration by parts with respect to $\partial_k$ gives  \begin{equation*}\begin{split}
J_{16}&= -2\int e^{2t} \partial_i V_k \partial_k u \gamma^{ij} \partial_j u  f'^{-3} \sqrt{\gamma} dtd\theta \\
&\quad+\int e^{2t} (\partial_k V_k +V_k \partial_k (\ln \sqrt{\gamma}) ) |D_\theta u|^2 f'^{-3} \sqrt{\gamma} dtd\theta \\
&\quad +\int e^{2t} V_k  \partial_k \gamma^{ij} \partial_i u \partial_j u  f'^{-3} \sqrt{\gamma} dtd\theta .\\
%& \geq & -C \left\|V\right\|_t \int |D_\theta u|^2 e^{2t} f'^{-3} \sqrt{\gamma} dtd\theta \\
 \end{split}\end{equation*} This yields to :  \begin{equation}\label{J_16}\begin{split}
J_{16} & \geq  -c \tau\int |D_\theta u|^2 e^{t} f'^{-3} \sqrt{\gamma} dtd\theta.
\end{split}\end{equation}

 \noindent Therefore, combining all the previous estimates on the $J_i$ (\emph{i.e.} \eqref{J_1} to \eqref{J_16}) and noticing that 
\[c\tau\int |D_\theta u|^2 e^{t} f'^{-3} \sqrt{\gamma} dtd\theta \leq \dfrac{1}{2}\tau \int\left|f^{\prime\prime}\right|\left|D_\theta u\right|^2f^{\prime^{-3}}\sqrt{\gamma}dtd\theta ,\] 
 we have established that 
 \begin{equation*}
 %\label{I_3}
 \begin{split}
 I_3 &\geq  3\tau \int\left|f^{\prime\prime}\right|\left|D_\theta u\right|^2f^{\prime^{-3}}\sqrt{\gamma}dtd\theta -c\tau^3\int e^t|u|^2f^{\prime^{-3}}\sqrt{\gamma}dtd\theta\\
 &-c\tau\int\left|f^{\prime\prime}\right|\left|\partial_t u\right|^2f^{\prime^{-3}}\sqrt{\gamma}dtd\theta
-c\tau^2\int\left|f^{\prime\prime}\right| |u|^2f^{\prime^{-3}}\sqrt{\gamma}dtd\theta.
 \end{split}\end{equation*}
\end{proof}

\bibliographystyle{plain}
\bibliography{biblio}
% \textsc{Laboratoire de Math\'ematiques et Physique Th\'eorique (UMR CNRS 6083),
%  Universit\'e Fran\c{c}ois Rabelais, Parc de Grandmont, 37200 Tours, France.}\\
% \emph{ E-mail}:  \textsc{{\bf laurent.bakri@gmail.com\vspace{0,4cm}}}\\
% % 
%  \textsc{Laboratoire de Math\'ematiques (UMR CNRS 6205), Universit\'e de Bretagne Occidentale, 6 av. Victor Le Gorgeu, 29238 Brest Cedex 3,
%  France.}\\
% \emph{ E-mail}:  \textsc{\bf{jean-baptiste.casteras@univ-brest.fr}}\\
\end{document}